\newcommand{\R}{\mathbb R}
\newcommand{\N}{\mathbb N}
\newcommand{\X}{\mathbb X}
\newcommand{\Y}{\mathbb Y}
\newcommand{\Uball}{{\mathbb B}}
\newcommand{\Usfer}{{\mathbb S}}
\newcommand{\dom}{{\rm dom}\, }
\newcommand{\graph}{{\rm graph}\,}
\newcommand{\nullv}{\mathbf{0}}
\newcommand{\cl}{{\rm cl}\, }
\newcommand{\conv}{{\rm conv}\, }
\newcommand{\inte}{{\rm int}\, }
\newcommand{\stardif}{\hbox{${*\over {}}$}}
\newcommand{\Lin}{{\rm L}}
\newcommand{\VP}{{\rm VOP}}
\newcommand{\PSV}{{\rm PSV}}
\newcommand{\Solv}{{\mathcal S}}
\newcommand{\IESol}{{\rm IE}}
\newcommand{\ival}{{\rm val}}
\newcommand{\parord}{\le_{{}_C}}
\newcommand{\rparord}{\ge_{{}_C}}
\newcommand{\Cmin}{C\hbox{-}\min}
\newcommand{\FRf}{F_{\mathcal{R},f}}
\newcommand{\calmup}{\overline{\rm clm}\,}
\newcommand{\Fder}{{\rm D}}
\newcommand{\VOP}[1]{{\rm VOP}#1\,}
\newcommand{\stsl}[1]{|\nabla #1|}
\newcommand{\psostsl}[1]{\overline{|\nabla_x #1|}{}^>}
\newcommand{\Diff}[1]{\mathcal{D}(#1)}
\newcommand{\ball}[2]{{\rm B}\left(#1, #2\right)}
\newcommand{\calm}[2]{{\rm clm}\, #1(#2)}
\newcommand{\ucalm}[2]{\overline{\rm clm}\, #1(#2)}
\newcommand{\dist}[2]{{\rm dist}\left(#1,#2\right)}
\newcommand{\exc}[2]{{\rm exc}(#1,#2)}
\newcommand{\incr}[2]{{\rm inc}(#1;#2)}
\newcommand{\Bder}[2]{{\rm D}_B#1(#2)}
\newcommand{\Lipusc}[2]{{\rm Lipusc}\,#1(#2)}
\newcommand{\Liplsc}[3]{{\rm Liplsc}\,#1(#2,#3)}
\begin{document}



\title{A condition for the stability of ideal efficient solutions
in parametric vector optimization via set-valued inclusions
}

\titlerunning{Stability of ideal efficient solutions via set-valued inclusions}        

\author{Amos Uderzo}


\institute{A. Uderzo \at
              Department of Mathematics and Applications \\
              University of Milano-Bicocca \\
              Via R. Cozzi, 55 - 20125 Milano (Italy) \\
              \email{amos.uderzo@unimib.it}
}

\date{Received: date \today / Accepted: date}

\maketitle

\begin{abstract}
In present paper, an analysis of the stability
behaviour of ideal efficient solutions to parametric
vector optimization problems is conducted.
A sufficient condition for the existence of ideal efficient
solutions to locally perturbed problems and their nearness to a
given reference value is provided
by refining recent results on the stability theory of
parameterized set-valued inclusions.
More precisely, the Lipschitz lower semicontinuity property
of the solution mapping is established, with an estimate
of the related modulus.
A notable consequence of this fact is the calmness behaviour of the
ideal value mapping associated to the parametric class of
vector optimization problems.
Within such an analysis, a refinement of a recent existence result
specific for ideal efficient solutions to unperturbed problem is also
discussed.

\subclass{MSC 90C31 \and 90C29 \and 49J53 \and 49J52}
\end{abstract}


%
%


\section{Introduction and problem statement}

Vector optimization offers a sophisticated and effective theoretical
apparatus for supporting decision processes in the presence of multiple
conflicting criteria.
A peculiar feature of vector optimization is that, in a context
of partial ordering, there are different concepts of solutions,
reflecting different viewpoints and priorities  of the decision
maker.
Among the basic and mainly investigated solution concepts, ideal
efficient solutions are the strongest ones, whose definition
appears very close to the natural definition of solution for
scalar optimization problems. In its global form, an ideal efficient
solution in fact captures the possibility of comparison with
any other admissible choice and, in doing so, it guarantees
better performances with respect to each among the multiple criteria
to be considered.
A drawback of such a concept is that the geometry of ideal efficiency
is very delicate, so that the existence of ideal efficient
solutions can hardly take place in many problems.
For this reason, in the rare circumstances when they
do exist, it becomes important to understand upon which conditions
their existence can be preserved in the presence of data
perturbations and, if this happens, how and how much
they change.
Whereas for the stability analysis of weak efficient and efficient
solutions to vector optimization problems a well-developed
literature can be found (see, among the others,
\cite{Bedn94,CaLoMoPa20,CheCra94,ChHuYa09,ChHuYa10,CraLuu00,SaNaTa85,Tamm94,Tani88}),
a stability analysis specific for ideal efficient solution
seems to be still largely unexplored.
The present paper describes an attempt to address this question.

Consider the following parametric optimization problem
$$
   \Cmin\, f(p,x) \quad\hbox{ subject to }\quad
   x\in\mathcal{R}(p), \leqno (\VOP{p})
$$
where $f:P\times\X\longrightarrow\Y$ is a mapping representing
the vector objective function, $C\subset\Y$ is a nontrivial (i.e. $C\ne\{\nullv\}$)
closed, pointed, convex cone, inducing the partial order relation $\parord$ on $\Y$
in the standard way (i.e., $y_1\parord y_2$ iff $y_2-y_1\in C$),
and $\mathcal{R}:P\rightrightarrows\X$ is the
feasible region set-valued mapping.
Henceforth $(P,d)$ denotes a metric space, where
perturbation parameters vary, while $(\X,\|\cdot\|)$ and $(\Y,\|\cdot\|)$
denote real Banach spaces.

Fixed $\bar p\in P$, an element $\bar x\in\mathcal{R}(\bar p)$ is said to be
a (global) {\it ideal efficient solution} to the particular problem $(\VOP{\bar p})$
if
\begin{equation}     \label{vle:idefdef}
  f(\bar p,\bar x)\parord f(\bar p,x),\quad\forall x\in\mathcal{R}(\bar p),
\end{equation}
or, equivalently, if
$$
  f(\bar p,\mathcal{R}(\bar p))\subseteq f(\bar p,\bar x)+C.
$$
If the value of the parameter $p$ is subject to perturbations,
making it to vary around the nominal value $\bar p$,
the corresponding problems $(\VOP{p})$ are
expected to admit different ideal efficient solutions, if any,
reflecting changes in the feasible region and in the vector objective function.
The study of the stability behaviour of vector optimization problems
leads therefore to consider the ideal efficient solution mapping $\IESol:P
\rightrightarrows\X$, which is defined by
$$
  \IESol(p)=\{x\in\mathcal{R}(p):\ x \hbox{ ideal efficient solution
  to $(\VOP{p})$}\}.
$$
The analysis of concrete examples gives evidence to the fact that
the behaviour of the mapping $\IESol$ may be bizarre even in the
presence of very amenable data. In the below example, for a
problem with linear (and smoothly perturbed) objective function
and linear (unperturbed) constraints the solution mapping $\IESol$ exhibits
a variety of situations: it alternates isolated solution existence
(meaning no solution for small changes of $p$ around a solvable
problem) with the best form of stability (solution existence
and invariance of the solution set for small changes of $p$).

\begin{example}
Let $P=[0,2\pi]$, $\X=\Y=\R^2$, $C=\R^2_+=\{y=(y_1,y_2)\in\R^2:\
y_1\ge 0,\ y_2\ge 0\}$, let the objective mapping
$f:[0,2\pi]\times\R^2\longrightarrow\R^2$ be given by
$$
   f(p,x)=A(p)x, \quad\hbox{ with }\quad
   A(p)=\left(\begin{array}{cc} \cos (p) & \sin (p) \\
                                -\sin (p) & \cos (p)
                   \end{array}\right),
$$
and let $\mathcal{R}:[0,2\pi]\rightrightarrows\R^2$ be
given by
$$
  \mathcal{R}(p)=T=\{x=(x_1,x_2)\in\R^2:\ x_1\ge 0,\ x_2\ge 0,
  \ x_1+x_2\le 1\},\quad\forall p\in [0,2\pi].
$$
Evidently, the matrix $A(p)$ represents the clockwise rotation
of $\R^2$ of an angle measuring $p$ radians.
By direct inspection of the so defined problem $(\VOP{p})$,
one sees that the associated solution mapping $\IESol:
[0,2\pi]\rightrightarrows\R^2$ results in
$$
  \IESol(p)=\left\{\begin{array}{cl} \{(0,0)\} & \quad\hbox{ if } p=0, \\
                                     \\
                                \{(1,0)\} & \quad\hbox{ if } p\in \left[{\pi\over 2},{3\over 4}\pi\right], \\
                                      \\
                                \{(0,1)\} & \quad\hbox{ if } p\in \left[{5\over 4}\pi,{3\over 2}\pi\right], \\
                                      \\
                                \varnothing & \quad\hbox{ otherwise.}
                   \end{array}\right.
$$
This says that for small changes in the value of
$p\in [0,2\pi]$ near $0$, the corresponding
problems $(\VOP{p})$ have no solution, whereas fixed any $\bar p\in
({\pi\over 2},{3\over 4}\pi)\cup ({5\over 4}\pi,{3\over 2}\pi)$,
for perturbations of the parameter sufficiently near to $\bar p$ the corresponding
problems are still solvable and the solution set stays constant.

It is worth noticing that this parametric optimization problem
admits efficient solutions for every $p\in [0,2\pi]$. Thus, the
present example shows that the geometry of ideal efficiency
can be broken by small perturbations of the parameter more
easily than the one related to mere efficiency.
\end{example}

It is plain to see that the search for ideal efficient solutions to
problems $(\VOP{p})$ can be regarded in fact as a specialization of a more general
class of problems involving set-valued mappings and cones, a kind
of parameterized generalized equations which are referred to as
set-valued inclusions in \cite{Uder21}. More precisely,
given set-valued mappings $\mathcal{R}:P
\rightrightarrows\X$, $F:P\times\X\rightrightarrows\Y$ and a nontrivial
cone $C\subseteq\Y$, these problems require to
$$
  \hbox{ find $x\in\mathcal{R}(p)$ such that }  F(p,x)\subseteq C.
  \leqno (\PSV)
$$
Their solution mapping will be denoted henceforth by $\Solv:P\rightrightarrows\X$,
namely
$$
  \Solv(p)=\{x\in\mathcal{R}(p):\ F(p,x)\subseteq C\}.
$$
By introducing the set-valued mapping $\FRf:P\times\X\rightrightarrows\Y$
defined as
\begin{equation}     \label{eq:FRfdef}
  \FRf(p,x)= f(p,\mathcal{R}(p))-f(p,x),
\end{equation}
it is clear that
$$
  \IESol(p)=\Solv(p).
$$
Set-valued inclusions, in simple as well as in parameterized form, have been
recently studied from several viewpoints in \cite{Cast99,Uder19,Uder20,Uder21,Uder21b}.
The idea underlying the research exposed in the present paper is
that useful insights into the stability behaviour of ideal efficient solutions
can be obtained by
refining in a proper way the study of solution stability of parameterized
set-valued inclusions.
In doing so, it will be also possible to establish some property of the
{\it ideal efficient value mapping} $\ival:\dom\IESol\longrightarrow\Y$ associated to
$(\VOP{p})$, namely the single-valued mapping well defined by
$$
  \ival(p)=f(p,\bar x_p),
$$
where $\bar x_p$ is any element of $\IESol(p)$. Notice that
$\ival(p)$ is well defined even when $\IESol(p)$ contains
more than one element, as it may happen. Indeed, according
to the definition of ideal efficient solution to $(\VOP{p})$,
the relation
$$
 f(p,\bar x_p)\parord f(p,x),\quad\forall x\in\mathcal{R}(p)
$$
must be true for every $\bar x_p\in\IESol(p)$. So, the fact
that $C$ is pointed entails that $f(p,\bar x_p)$ must be
the same value for every $\bar x_p\in\IESol(p)$.

The contents of the paper are arranged as follows. In Section \ref{Sect:2}
a sufficient condition for the existence of ideal efficient solutions
for a problem in the family $(\VOP{p})$, in the case of a fixed value
of $p$, is provided.
In Section \ref{Sect:3} a sufficient condition for the solution mapping
associated to a problem family $(\PSV)$ to be stable is established.
Here the stability behaviour is expressed as Lipschitz lower semicontinuity
for set-valued mappings. An estimate for the related modulus is also
provided.
In Section \ref{Sect:4} the result established in the previous section
finds a specific application in providing sufficient conditions for
the stability of ideal efficient solutions to problems $(\VOP{p})$.
The focus is therefore on Lipschitz lower semicontinuity of $\IESol$,
but, whenever $\IESol$ happens to be single-valued,
such a property qualifies as calmness.
Section \ref{Sect:5} is reserved for concluding remarks
and perspectives.

The main notations in use throughout the paper are basically standard:
$\R$ denotes the real number field and
$\R^n_+$ indicates the nonnegative orthant in the Euclidean space $\R^n$.
In any metric space $(X,d)$,
$\ball{x}{r}$ denotes the closed ball with center $x\in X$ and radius $r\ge 0$,
$\dist{x}{S}$ the distance of $x$ from $S\subseteq X$, with the convention
that $\dist{x}{\varnothing}=+\infty$, and
$\ball{S}{r}=\{x\in X:\ \dist{x}{S}\le r\}$ the $r$-enlargement of $S$.
The symbol $\inte S$ and $\cl S$ indicate the topological interior of $S$
and closure of $S$, respectively.
Given $A,\, B\subseteq X$, the value $\exc{A}{B}=\sup\{\dist{a}{B}:\ a\in A\}$
is the excess of $A$ over $B$.
In any real Banach space $(\X,\|\cdot\|)$, with null vector $\nullv$,
$\Uball=\ball{\nullv}{1}$ stands for the closed unit ball, whereas
$\Usfer$ for the unit sphere.
Given two nonempty subsets $A,\, B\subseteq\X$, their $\ast$-difference
(a.k.a. Pontryagin difference) is defined as
$A\stardif B=\{x\in\X:\ x+B\subseteq A\}$.
The convex hull of a set $A\subseteq\X$ is denoted by $\conv A$.
The space of all $n\times n$ matrices with real entries is
indicated by $\Lin(\R^n)$, the operator norm of $\Lambda\in
\Lin(\R^n)$ by $\|\Lambda\|_\Lin$, and the inverse of $\Lambda$
by $\Lambda^{-1}$.
If $\Phi:\X\rightrightarrows\Y$ denotes a set-valued mapping, its
domain is indicated by $\dom\Phi$.
The acronyms p.h., l.s.c. and u.s.c. stand for positively homogeneous,
lower semicontinuous and upper semicontinuous, respectively.
The meaning of additional symbols will be explained contextually
to their introduction.

\vskip1cm


\section{An existence result without boundedness and continuity}    \label{Sect:2}

This section is a digression from the main theme of the paper.
A basic feature of any stability behaviour of the solution mapping
to a parameterized problem is non-emptiness of its values.
Therefore, before exploring conditions for this phenomenon
to happen, it seems reasonable to spend some words about the
solution existence for a fixed problem within the family
$(\VOP{p})$.
Thus, the present section presents a sufficient condition for the
existence of ideal efficient solutions to the following (geometrically)
constrained vector optimization problem $(\VP)$:
$$
   \Cmin\, f(x) \quad\hbox{ subject to }\quad
   x\in\mathcal{R}, \leqno (\VP)
$$
where $f:X\longrightarrow\Y$, $C\subseteq\Y$ and $\mathcal{R}$
are the problem data. Throughout the current section,
$(X,d)$ stands for a complete metric space, whereas $(\Y,\|\cdot\|)$
denotes a real Banach space.
Such existence condition refines and accomplishes an analogous
result recently proposed (see \cite[Theorem 5.1]{Uder19}), by weakening
several of its hypotheses. Indeed, the continuity of $f$
is replaced by the lower $C$-semicontinuity, while the closedness
of $f(\mathcal{R})$ is dropped out. Besides, an assumption,
given for granted in \cite[Theorem 5.1]{Uder19}, is now explicitly
made, which avoids a pathological, yet possible,
behaviour of $(\VP)$.

Let us recall that,
according to \cite{Luc89}, a mapping $f:X\longrightarrow\Y$ is said to be
$C$-lower semicontinuous (for short, $C$-l.s.c.) at $\bar x\in X$ if for every
$\epsilon>0$ there exists $\delta_\epsilon>0$ such that
\begin{equation}    \label{in:defClsc}
  f(x)\in\ball{f(\bar x)}{\epsilon}+C,\quad\forall x\in\ball{\bar x}{\delta_\epsilon}.
\end{equation}
Of course, whenever $f$ is continuous at $\bar x$, a fortiori it is
$C$-l.s.c. at the same point.
Following a variational approach combined with an analysis
via set-valued inclusions,
the ideal efficient solutions to $(\VP)$ can be singled out
by means of the function $\nu:\mathcal{R}\longrightarrow [0,+\infty]$,
defined by
\begin{equation}    \label{eq:defmeritfun}
  \nu(x)=\exc{f(\mathcal{R})-f(x)}{C}=\exc{f(\mathcal{R})}{f(x)+C}.
\end{equation}
More precisely, since the cone $C$ has been assumed to be closed,
it is clear that
\begin{equation}    \label{eq:solcharnu}
  \IESol=[\nu\le 0]=[\nu=0],
\end{equation}
where $\IESol$ indicates the set of all idel efficient solutions
to $(\VP)$.

The next lemma connects assumptions on the problem data of $(\VP)$
with properties of $\nu$, which will be useful in the sequel.

\begin{lemma}      \label{lem:meritprolsc}
Let $f:X\longrightarrow\Y$ be a mapping, let $C\subseteq\Y$ be
a closed, convex cone and let $\mathcal{R}\subseteq X$ be a
nonempty closed set.

(i) If there exists $x_0\in \mathcal{R}$ such that the set
$[f(\mathcal{R})-f(x_0)]\backslash C$ is bounded, then
$\nu\not\equiv +\infty$.

(ii) If $f$ is $C$-l.s.c. at $\bar x\in \mathcal{R}$, then $\nu$ is
l.s.c. at $\bar x$.
\end{lemma}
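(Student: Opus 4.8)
I would treat the two statements separately, as each reduces to a direct estimate on the excess functional defining $\nu$. For part (i), note first that the points of $f(\mathcal{R})-f(x_0)$ already lying in $C$ contribute nothing to the excess: since $C$ is closed, $\dist{y}{C}=0$ exactly when $y\in C$, so $\nu(x_0)=\exc{f(\mathcal{R})-f(x_0)}{C}$ is the supremum of $\dist{y}{C}$ taken over $[f(\mathcal{R})-f(x_0)]\backslash C$ alone. As $C$ is a cone, $\nullv\in C$, whence $\dist{y}{C}\le\|y\|$ for every $y$; boundedness of $[f(\mathcal{R})-f(x_0)]\backslash C$ by some $M\ge 0$ then gives $\nu(x_0)\le M<+\infty$, so that $\nu\not\equiv+\infty$. (Should that difference set be empty, then $f(\mathcal{R})-f(x_0)\subseteq C$ and $\nu(x_0)=0$.) This part is essentially immediate.

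For part (ii), the crux is to turn the $C$-l.s.c. inclusion into a one-sided comparison of the sets $f(x)+C$ and $f(\bar x)+C$. Fixing $\epsilon>0$, $C$-lower semicontinuity at $\bar x$ supplies $\delta_\epsilon>0$ with $f(x)-f(\bar x)\in\epsilon\Uball+C$ for all $x\in\ball{\bar x}{\delta_\epsilon}$. Writing $f(x)=f(\bar x)+u+c$ with $u\in\epsilon\Uball$ and $c\in C$, and using $c+C\subseteq C$, I would deduce the set inclusion $f(x)+C\subseteq f(\bar x)+\epsilon\Uball+C$.

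I would then invoke two elementary monotonicity facts: the distance to a larger target set is no larger, and enlarging a target by $\epsilon\Uball$ lowers distances by at most $\epsilon$, i.e. $\dist{z}{A+\epsilon\Uball}\ge\dist{z}{A}-\epsilon$. With $z=f(r)$ and $A=f(\bar x)+C$, these yield $\dist{f(r)}{f(x)+C}\ge\dist{f(r)}{f(\bar x)+C}-\epsilon$ for every $r\in\mathcal{R}$; taking the supremum over $\mathcal{R}$ and recalling $\nu(x)=\exc{f(\mathcal{R})}{f(x)+C}$ gives $\nu(x)\ge\nu(\bar x)-\epsilon$ uniformly on $\ball{\bar x}{\delta_\epsilon}$, which is exactly lower semicontinuity of $\nu$ at $\bar x$ (the case $\nu(\bar x)=+\infty$ being a limiting one, where the inequality forces $\nu\equiv+\infty$ near $\bar x$). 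The one point demanding care is the direction of these inequalities: since I need a lower bound on $\nu(x)$, the inclusion must enlarge $f(x)+C$ into $f(\bar x)+\epsilon\Uball+C$, so that distances drop only in a controlled way; reversing it would produce a useless upper bound. Everything else is routine manipulation of the excess and the supremum over $\mathcal{R}$.
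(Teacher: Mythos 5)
Your proposal is correct and follows essentially the same route as the paper: part (i) via the bound $\dist{y}{C}\le\|y\|$ on the bounded set $[f(\mathcal{R})-f(x_0)]\backslash C$, and part (ii) via the inclusion $f(x)+C\subseteq f(\bar x)+\epsilon\Uball+C$ combined with the estimate $\exc{A}{B+\epsilon\Uball}\ge\exc{A}{B}-\epsilon$. The only cosmetic difference is that you phrase lower semicontinuity with neighbourhoods and pointwise distances before taking the supremum, whereas the paper argues with sequences and directly at the level of the excess; the handling of the $\nu(\bar x)=+\infty$ case matches as well.
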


\begin{proof}
(i) It suffices to observe that, if $M>0$ is such that
$[f(\mathcal{R})-f(x_0)]\backslash C\subseteq M\Uball$, then it results
in
\begin{eqnarray*}
  \nu(x_0) = \sup_{y\in [f(\mathcal{R})-f(x_0)]\backslash C}\dist{y}{C}
  \le \sup_{y\in M\Uball}\dist{y}{C}
  \le  \sup_{y\in M\Uball}\|y\|=M<+\infty,
\end{eqnarray*}
and hence $\nu\not\equiv +\infty$.

(ii) It is useful to recall that, given two nonempty sets $A,\,
B\subseteq\Y$, and $\epsilon>0$, then
$$
  \exc{A}{B+\epsilon\Uball}\ge\exc{A}{B}-\epsilon.
$$
Indeed, one has
\begin{eqnarray*}
  \exc{A}{B+\epsilon\Uball} &=& \sup_{a\in A}\inf_{b\in B\atop u\in\Uball}\|a-b-\epsilon u\|
  \ge \sup_{a\in A}\inf_{b\in B \atop u\in\Uball} [\|a-b\|-\epsilon\|u\|]  \\
  &=&  \sup_{a\in A} \inf_{b\in B} [\|a-b\|-\epsilon]=\exc{A}{B}-\epsilon.
\end{eqnarray*}
Now, let $(x_n)_n$ be a sequence in $\mathcal{R}$, with $x_n\longrightarrow\bar x$,
as $n\to\infty$. If $\nu(\bar x)=0$, the inequality
$\liminf_{n\to\infty}\nu(x_n)\ge 0=\nu(\bar x)$ trivially holds true,
as $\nu$ takes nonnegative values only.
Assume that $\nu(\bar x)>0$  and take any sequence $(x_n)_n$ in $\mathcal{R}$,
such that $x_n\longrightarrow\bar x$. Fix an arbitrary $\epsilon>0$.
Since $f$ is $C$-l.s.c. at $\bar x$, there exists $\delta_\epsilon>0$ such that
inclusion $(\ref{in:defClsc})$ holds. Since for a proper $\bar n\in\N$, one has
$x_n\in\ball{\bar x}{\delta_\epsilon}$, for every $n\in\N$, $n\ge\bar n$,
then it is $f(x_n)+C\subseteq f(\bar x)+\epsilon\Uball+C$, for every $n\in\N$,
$n\ge\bar n$. Consequently, one obtains
\begin{eqnarray*}
  \nu(x_n) &=& \exc{f(\mathcal{R})}{f(x_n)+C}\ge \exc{f(\mathcal{R})}{f(\bar x)+
  \epsilon\Uball+C}  \\ &\ge &\exc{f(\mathcal{R})}{f(\bar x)+C}-\epsilon
  = \nu(\bar x)-\epsilon, \quad\forall n\in\N,\ n\ge\bar n.
\end{eqnarray*}
The above inequalities imply
$$
  \liminf_{n\to\infty}\nu(x_n)\ge\nu(\bar x)-\epsilon.
$$
The thesis follows by arbitrariness of $\epsilon$. The reader should notice
that such a reasoning works also in the case $\nu(\bar x)=+\infty$.
\hfill$\square$
\end{proof}

In order to formulate the next result, it is to be recalled that, following
\cite[Definition 3.1]{Uder19}, given a set $S\subseteq X$
and a mapping $g:X\longrightarrow\Y$, $g$ is said to be
metrically $C$-increasing on $S$ if there exists a constant $a>1$
such that
\begin{equation}     \label{in:metCincrdef}
  \forall x\in S,\ \forall r>0,\ \exists u\in\ball{x}{r}\cap S:\
  \ball{g(u)}{ar}\subseteq\ball{g(x)+C}{r}.
\end{equation}
The quantity
$$
  \incr{g}{S}=\sup\{a>1:\ \hbox{ inclusion $(\ref{in:metCincrdef})$ holds}\}
$$
is called the {\it exact bound of metric $C$-increase} of $g$ on $S$.
For a discussion of this notion, including examples, related properties
and its connection with the decrease principle of variational analysis,
the reader is referred to \cite{Uder19}.

\begin{theorem}[Ideal efficient solution existence]    \label{thm:iesolexist}
With reference to a problem $(\VP)$, suppose that:

(i) $\mathcal{R}$ is nonempty and closed;

(ii) there exists $x_0\in\mathcal{R}$ such that $[f(\mathcal{R})-f(x_0)]\backslash C$
is a bounded set;

(iii) $f$ is $C$-l.s.c. with respect to the topology induced on $\mathcal{R}$,
at each point of $\mathcal{R}$;

(iv) $-f$ is metrically $C$-increasing on $\mathcal{R}$.

\noindent Then, $\IESol\ne\varnothing$ is closed and the following estimate holds
\begin{equation}     \label{in:erboIESol}
   \dist{x}{\IESol}\le{\nu(x)\over\incr{-f}{\mathcal{R}}},\quad\forall
   x\in \mathcal{R}.
\end{equation}
\end{theorem}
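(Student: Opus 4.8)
The plan is to reduce the entire statement to a quantitative study of the merit function $\nu$ defined in $(\ref{eq:defmeritfun})$, relying on the characterization $\IESol=[\nu\le 0]=[\nu=0]$ recorded in $(\ref{eq:solcharnu})$. I would first settle the qualitative assertions. Because $f$ is $C$-l.s.c. at each point of $\mathcal{R}$ by hypothesis (iii), Lemma $\ref{lem:meritprolsc}$(ii) makes $\nu$ lower semicontinuous on $\mathcal{R}$, so its sublevel set $[\nu\le 0]=\IESol$ is closed in $\mathcal{R}$, and hence in $\X$ since $\mathcal{R}$ is closed by (i). Next, hypothesis (ii) combined with Lemma $\ref{lem:meritprolsc}$(i) yields $\nu\not\equiv+\infty$, so $\nu$ is finite at some feasible point. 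Being closed in the complete space $X$, the set $\mathcal{R}$ is itself a complete metric space; the task thus becomes an error bound for the zero set of a nonnegative, lower semicontinuous, somewhere finite function on a complete metric space.

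The core of the proof is to turn hypothesis (iv), a set inclusion, into a scalar decrease estimate for $\nu$. Fix $x\in\mathcal{R}$ with $\nu(x)<+\infty$ and let $a>1$ be any constant for which $(\ref{in:metCincrdef})$ holds with $g=-f$. Given $r>0$, that inclusion supplies $u\in\ball{x}{r}\cap\mathcal{R}$ with $\ball{-f(u)}{ar}\subseteq\ball{-f(x)+C}{r}$. I would read this geometrically as the statement that $-f(u)$ lies at positive depth inside the translated cone $-f(x)+C$, the depth being proportional to $r$ with a factor that grows as $a$ increases. Cancelling the common ball $r\Uball$ by a cancellation argument of R\aa{}dstr\"{o}m type and using $C+C=C$, this turns into an inclusion of the form $f(x)+C+\sigma\Uball\subseteq f(u)+C$ with $\sigma>0$ proportional to $r$. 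Feeding this into the excess and invoking the elementary estimate $\exc{A}{B+\epsilon\Uball}\le\max\{\exc{A}{B}-\epsilon,\,0\}$ (whose reverse inequality already appears in the proof of Lemma $\ref{lem:meritprolsc}$) then gives the decrease
\begin{equation*}
  \nu(u)\le\max\{\nu(x)-\sigma,\,0\}.
\end{equation*}

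Once this decrease estimate is available, the conclusion follows in one step. Given $x\in\mathcal{R}$ with $\nu(x)<+\infty$, I would choose the step $r$ so that $\sigma=\nu(x)$; the resulting $u$ then satisfies $\nu(u)=0$, whence $u\in[\nu=0]=\IESol$, while $u\in\ball{x}{r}\cap\mathcal{R}$ forces $\dist{x}{\IESol}\le r$. Applying this at a point where $\nu$ is finite already proves $\IESol\ne\varnothing$, and optimizing the admissible constant $a$, letting it increase to $\incr{-f}{\mathcal{R}}$, delivers the estimate $(\ref{in:erboIESol})$ (for points with $\nu(x)=+\infty$ the bound is vacuous). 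If one prefers to remain within the variational framework of $(\PSV)$ emphasised in the paper, the same decrease estimate amounts to a lower bound on the strong slope of $\nu$, and the decrease principle applied to the lower semicontinuous function $\nu$ on the complete space $\mathcal{R}$ produces the identical conclusion.

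The step I expect to be the main obstacle is the translation carried out in the second paragraph: passing from the inclusion $(\ref{in:metCincrdef})$ for $-f$ to the numerical decrease of $\nu$ demands that the cancellation argument and the excess-of-enlargement calculus be executed carefully in a general Banach space, where no compactness is at hand. It is precisely in controlling the proportionality factor $\sigma/r$ that the exact bound of metric $C$-increase surfaces, and this factor must be tracked with care in order for $\incr{-f}{\mathcal{R}}$ to appear in the denominator of $(\ref{in:erboIESol})$.
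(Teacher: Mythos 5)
Your qualitative groundwork coincides with the paper's: Lemma \ref{lem:meritprolsc} supplies lower semicontinuity and somewhere-finiteness of $\nu$, closedness of $\IESol=[\nu\le 0]$ follows, and $\mathcal{R}$ is a complete metric space. The divergence is in the quantitative half. The paper does not rederive the error bound at all; it verifies the hypotheses of \cite[Theorem 4.2]{Uder19} (applied with $X=\mathcal{R}$, $F=f(\mathcal{R})-f$, $\phi=\nu$, after noting that closed-valuedness of $F$ is dispensable and that hypothesis (iv) transfers the metric $C$-increase from $-f$ to $f(\mathcal{R})-f$) and cites that result for both $\IESol\ne\varnothing$ and $(\ref{in:erboIESol})$. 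You instead attempt a self-contained derivation, and the step you yourself flag as ``the main obstacle'' is exactly where the proposal fails to deliver the stated constant. Executing your own cancellation plan: from $\ball{-f(u)}{ar}\subseteq\ball{-f(x)+C}{r}$, R{\aa}dstr{\"o}m cancellation of $r\Uball$ (legitimate, since $-f(x)+C$ is closed and convex and $r\Uball$ is bounded) yields $-f(u)+(a-1)r\Uball\subseteq -f(x)+C$, i.e. $f(x)-f(u)+(a-1)r\Uball\subseteq C$; propagating this through the excess gives $\nu(u)\le\max\{\nu(x)-(a-1)r,\,0\}$. The depth gained per step of radius $r$ is therefore $(a-1)r$, not $ar$, because the target in $(\ref{in:metCincrdef})$ is the $r$-enlargement of $-f(x)+C$ rather than $-f(x)+C$ itself. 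Consequently your one-step argument (take $r=\nu(x)/(a-1)$), and equally the slope/Ekeland variant you mention (the decrease estimate only bounds the strong slope of $\nu$ from below by $a-1$), produce $\dist{x}{\IESol}\le\nu(x)/(\incr{-f}{\mathcal{R}}-1)$, which is weaker than $(\ref{in:erboIESol})$. Asserting that ``optimizing the admissible constant $a$'' delivers the claimed estimate does not close this gap; the missing factor is precisely the content the paper imports from the cited theorem rather than proving.

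A secondary warning sign: since $(\ref{in:metCincrdef})$ is quantified over all $r>0$, your one-step argument, were its constant correct, would prove existence without ever invoking completeness of $X$ or hypothesis (iii) (these would serve only for the closedness of $\IESol$). The paper's architecture --- routing everything through an Ekeland-type result on the complete space $\mathcal{R}$ with $\nu$ lower semicontinuous --- indicates that the intended mechanism is iterative/variational, not a single jump. So either your shortcut proves too much, or the translation of hypothesis (iv) into a decrease of $\nu$ must be organized differently from your sketch in order to reach the denominator $\incr{-f}{\mathcal{R}}$. Either way, the central quantitative step of the theorem is not established by the proposal as written.
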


\begin{proof}
The idea is to apply \cite[Theorem 4.2]{Uder19}, after observing that,
as one readily checks by a perusal of its proof, assuming the set-valued
mapping $F:X\rightrightarrows\Y$, $F=f(\mathcal{R})-f$, to be closed-valued
is not required in order for getting the validity of the aforementioned result.

That said, notice that, as a closed subset of a complete metric space,
$\mathcal{R}$ is a complete metric space. In the light of Lemma \ref{lem:meritprolsc},
by hypotheses (ii) and (iii), the function $\nu:\mathcal{R}\longrightarrow [0,+\infty]$
defined as in $(\ref{eq:defmeritfun})$ is l.s.c. on $\mathcal{R}$ and $\nu\not\equiv
+\infty$.
Since according to $(\ref{eq:solcharnu})$ $\IESol=[\nu\le 0]$
is a sublevel set of a l.s.c. function, it is closed.

Now, in order to show that $\IESol\ne\varnothing$ and the error bound in
$(\ref{in:erboIESol})$ holds true, it suffices to apply \cite[Theorem 4.2]{Uder19},
with $X=\mathcal{R}$, $F=f(\mathcal{R})-f$ and $\phi=\nu$,
following the same argument as proposed in \cite[Theorem 5.1]{Uder19}.
In doing so,
notice that the existence of $x_0\in\mathcal{R}$ such that $\nu(x_0)<+\infty$
is guaranteed by hypothesis (ii), whereas the lower semicontinuity of $\nu$
can be
derived directly from the lower $C$-semicontinuity of $f$, instead of
from the lower semicontinuity of $F$.
Besides, the hypothesis (iv) entails the property of metric $C$-increase
on $\mathcal{R}$ for the mapping $f(\mathcal{R})-f$.
\hfill$\square$
\end{proof}

\begin{example}
Let $X=\Y=\R^2$ be endowed with its standard Euclidean space structure,
let $C=\R^2_+$ and let $f:\R^2\longrightarrow\R^2$ be defined by
$$
  f(x)=-x+{\rm e}\sum_{n=0}^{\infty}(n+1)\chi_{(n,n+1]}(\|x\|_\infty),
$$
where ${\rm e}=(1,1)\in\R^2$, $\chi_A$ denotes the characteristic function
of a subset $A\subseteq\R$, and $\|x\|_\infty=\max\{|x_1|,\, |x_2|\}$.
Let the feasible region be $\mathcal{R}=-\R_+{\rm e}=\{x=(x_1,x_2)\in\R^2:\
x_1=x_2\le 0\}$.
One sees from the definition that
\begin{equation}    \label{eq:fRex}
  f(\mathcal{R})=\{(0,0)\}\cup\bigcup_{n=0}^\infty(2n+1,2n+2]{\rm e}.
\end{equation}
This makes it clear that, for the problem $(\VP)$ defined by these data
it is $\IESol=\{(0,0)\}$.
Notice that $f(\mathcal{R})$ fails to be closed, as $(2n+1){\rm e}\not\in
f(\mathcal{R})$, for every $n\in\N$, even though $\mathcal{R}$ is a closed
subset of $\R^2$.
It is readily seen that  $f$ is not continuous at each point of the
form $x=-n{\rm e}\in
\mathcal{R}$, with $n\in\N$. Nonetheless, $f$ turns out to be $\R^2_+$-l.s.c.
at each point of $\mathcal{R}$.
Indeed, fixed any $x_0\in\mathcal{R}$ and $\epsilon\in (0,1)$, it suffices
to take $\delta=\epsilon$ in order to have
\begin{equation}     \label{in:R2+lscf}
   f(x)\in \ball{f(x_0)}{\epsilon}+\R^2_+,\quad\forall x\in\ball{x_0}{\delta}
   \cap\mathcal{R}.
\end{equation}
If $x_0=(0,0)$ this inclusion is evident because $f(\mathcal{R})\subseteq
\R^2_+\subseteq B(f(0),\epsilon)+\R^2_+$. If $x_0\in\bigcup_{n=0}(n,n+1)(-{\rm e})$,
$f$ coincides with the function $x\mapsto -x+(n+1){\rm e}$ in a neighbourhood
in $\mathcal{R}$ of $x_0$ and it is continuous with respect to the topology
induced on $\mathcal{R}$ at $x_0$. If $x_0=-n{\rm e}$, with $n\in\N\backslash
\{0\}$, the inclusion in $(\ref{in:R2+lscf})$ is true because for every
$x\in\ball{x_0}{\epsilon}\cap\mathcal{R}$, with $x_0\parord x$ it is
$f(x)\in\ball{f(x_0)}{\epsilon}$, whereas for every
$x\in\ball{x_0}{\epsilon}\cap\mathcal{R}$, with $x\parord x_0$, $x\ne x_0$,
it results in
\begin{eqnarray*}
   f(x) &=& -x+(n+1){\rm e}\rparord -x_0+(n+1){\rm e}\rparord -x_0+n{\rm e}
   \\
      &=& f(x_0),
\end{eqnarray*}
so
$$
  f(x)\in f(x_0)+\R^2_+\subseteq \ball{f(x_0)}{\epsilon}+\R^2_+.
$$
Let us show that $-f$ is $\R^2_+$-increasing on $\mathcal{R}$.
Fix an arbitrary $x\in\mathcal{R}$ and $r>0$ and set
$$
   u={{\rm e}\over\|{\rm e}\|} \qquad\hbox{ and }\qquad
   z=x+ru\in\ball{x}{r}\cap\mathcal{R}.
$$
Taken $a=2>1$, it is possible to prove that
\begin{equation}     \label{in:R2+incrf}
  -f(z)+ar\Uball\subseteq -f(x)+\R^2_++r\Uball.
\end{equation}
Indeed, since it is $\|x+ru\|_\infty\le\|x\|_\infty$ for every
$x\in\mathcal{R}$, one has
$$
   {\rm e}\sum_{n=0}^{\infty}(n+1)\chi_{(n,n+1]}(\|x+ru\|_\infty)
   \parord
   {\rm e}\sum_{n=0}^{\infty}(n+1)\chi_{(n,n+1]}(\|x\|_\infty)
$$
and hence
$$
  {\rm e}\sum_{n=0}^{\infty}(n+1)\chi_{(n,n+1]}(\|x\|_\infty)\in
  {\rm e}\sum_{n=0}^{\infty}(n+1)\chi_{(n,n+1]}(\|x+ru\|_\infty)+\R^2_+,
$$
wherefrom it follows
$$
  -{\rm e}\sum_{n=0}^{\infty}(n+1)\chi_{(n,n+1]}(\|x+ru\|_\infty)\in
  -{\rm e}\sum_{n=0}^{\infty}(n+1)\chi_{(n,n+1]}(\|x\|_\infty)+\R^2_+,
$$
On the other hand, it is clear that for every $r>0$ it is
\begin{equation}     \label{in:eBalla}
  r{{\rm e}\over\|{\rm e}\|}+ar\Uball=r\ball{{{\rm e}\over\|{\rm e}\|}}
  {a}\subseteq r\Uball+\R^2_+.
\end{equation}
Thus, in the light of the above inclusions, one finds
\begin{eqnarray*}
  -f(z)+ar\Uball &=&(x+ru)-{\rm e}\sum_{n=0}^{\infty}(n+1)\chi_{(n,n+1]}(\|x+ru\|_\infty)+ar\Uball  \\
  &\subseteq & x+r{{\rm e}\over\|{\rm e}\|}-{\rm e}\sum_{n=0}^{\infty}(n+1)\chi_{(n,n+1]}(\|x\|_\infty)
  +\R^2_++ar\Uball  \\
  &\subseteq & f(x)+\left(r{{\rm e}\over\|{\rm e}\|}+ar\Uball\right)+\R^2_+  \\
  &\subseteq & f(x)+r\Uball+\R^2_+,
\end{eqnarray*}
so inclusion $(\ref{in:R2+incrf})$ is satisfied. Moreover, one
can see that $a=2$ is the greatest constant for which inclusion
$(\ref{in:eBalla})$ and hence inclusion $(\ref{in:R2+incrf})$ is true.
Thus, it is $\incr{-f}{\mathcal{R}}=2$.

Thus, since for $x_0=(0,0)$ the set $[f(\mathcal{R})-f(x_0)]\backslash
\R^2_+=\varnothing$ is bounded, for this instance of problem $(\VP)$
Theorem \ref{thm:iesolexist} can be applied. It must be
remarked that the existence of an ideal efficient solution is achieved
in spite of the fact that $\mathcal{R}$ is not bounded, $f(\mathcal{R})$
is not closed and $f$ is not continuous on $\mathcal{R}$.

To accomplish the analysis of the present example, observe that, as
$f(\mathcal{R})$ takes the form in $(\ref{eq:fRex})$ and $(0,0)\in
f(\mathcal{R})$, one readily sees that
\begin{eqnarray*}
   \nu(x) &=& \exc{f(\mathcal{R})-f(x)}{\R^2_+}=
   \exc{f(\mathcal{R})}{f(x)+\R^2_+} =\|f(x)\| \\
   &=& \|x||+\|(n+1){\rm e}\|=\|x\|+\sqrt{2}(n+1),
   \quad\forall x\in\mathcal{R}:\ n<\|x\|_\infty\le n+1.
\end{eqnarray*}
On the other hand, clearly it is $\dist{x}{\IESol}=\|x||$. By taking
into account that, for every $x\in\R^2$, it is $\|x\|\le\sqrt{2}\|x\|_\infty$,
the inequality
$$
  \|x\|_\infty\le n+1
$$
implies
$$
  \|x\|\le\sqrt{2}(n+1).
$$
Thus, one finds
\begin{eqnarray*}
  \dist{x}{\IESol} &=& \|x\|\le{\nu(x)\over\incr{-f}{\mathcal{R}}}
  ={\|x\|+\sqrt{2}(n+1)\over 2}\le{\sqrt{2}(n+1)+\sqrt{2}(n+1)\over 2} \\
  &=&  \sqrt{2}(n+1), \quad\forall x\in\mathcal{R}:\ n<\|x\|_\infty\le n+1,
\end{eqnarray*}
which agrees with the estimate provided in $(\ref{in:erboIESol})$.
\end{example}

Several existence results for ideal efficient solutions can be found
in the literature dedicated to vector optimization. Some of them
demand compactness of the feasible region (see, for instance,
\cite{Oett97}). Other results drop out the boundedness of the feasible
region, while are essentially based on convexity properties of the objective
mapping (see \cite{FloOet01,Flor02}). Theorem \ref{thm:iesolexist} avoids
any form of convexity (remember that $X$ is a metric space), whereas the solution
existence relies on metric completeness, through the property of
metric $C$-increase. Such an approach makes it possible to complement
the qualitative part of the statement (existence) with a quantitative
part (an error bound for the distance from the solution set).

\vskip1cm


\section{Parameterized set-valued inclusions with moving feasible region}  \label{Sect:3}

This section deals with stability properties of the solution mapping
$\Solv:P\rightrightarrows\X$ associated to a parameterized problem $(\PSV)$.
More precisely, a sufficient condition for $\Solv$ to be Lipschitz l.s.c.
at a point of its graph is established.
Recall that, according to \cite{KlaKum02}, a set-valued mapping
$\Phi:P\rightrightarrows X$ between metric spaces is said to be
{\it Lipschitz l.s.c.} at $(\bar p,\bar x)\in\graph\Phi$ if there exist
positive $\delta$ and $\ell$ such that
\begin{equation}    \label{ne:defLiplsc}
  \Phi(p)\cap\ball{\bar x}{\ell d(\bar p,p)}\ne\varnothing,
  \quad\forall p\in\ball{\bar p}{\delta}.
\end{equation}
The value
\begin{equation}     \label{eq:Liplscmoddef}
  \Liplsc{\Phi}{\bar p}{\bar x}=\inf\{\ell>0:\
  \exists\delta>0 \hbox{ for which $(\ref{ne:defLiplsc})$ holds}\}
\end{equation}
is called {\it modulus of Lipschitz lower semicontinuity}
of $\Phi$ at $(\bar p,\bar x)$.

Discussions about this property and its relationships with other
quantitative semicontinuity properties for set-valued mappings
can be found, for instance, in \cite{KlaKum02,Uder21}.
For the purpose of the present analysis, it is relevant to
observe that the requirement in $(\ref{ne:defLiplsc})$ entails
local solvability for problems $(\PSV)$ and nearness to the reference
value $\bar x$ of at least some among the solutions to the perturbed
problems. Not only: the condition postulated in $(\ref{ne:defLiplsc})$
contains a quantitative aspect, in prescribing a nearness
which must be proportional to the parameter variation. The
rate is measured by the modulus of Lipschitz lower semicontinuity.
Historically,
this quantitative aspect motivated the use of the prefix
`Lipschitz' for qualifying such kind of stability
behaviours in the variational analysis literature, to
distinguish them from mere topological properties
(see \cite{DonRoc14,KlaKum02,Mord06,Peno13,RocWet98} and commentaries
therein).

Another property of this kind, which will be employed in
the sequel, is Lipschitz upper semicontinuity: a set-valued mapping
$\Phi:P\rightrightarrows X$ between metric spaces is said to be
{\it Lipschitz u.s.c.} at $\bar p\in\dom\Phi$ if there exist
positive $\delta$ and $\ell$ such that
\begin{equation}    \label{ne:defLipusc}
  \exc{\Phi(p)}{\Phi(\bar p)}\le\ell d(p,\bar p),
  \quad\forall p\in\ball{\bar p}{\delta}.
\end{equation}
The value
$$
  \Lipusc{\Phi}{\bar p}=\inf\{\ell>0:\ \exists\delta>0
  \hbox{ for which $(\ref{ne:defLipusc})$ holds}\}
$$
is called {\it modulus of Lipschitz upper semicontinuity}
of $\Phi$ at $\bar p$.
It is possible to see at once that, whenever $\Phi$ happens to be
single-valued in a neighbourhood of $\bar p$, Lipschitz lower
semicontinuity at $(\bar p,\Phi(\bar p))$ and Lipschitz upper
semicontinuity at $\bar p$ reduce to the same property, as
conditions $(\ref{ne:defLiplsc})$ and $(\ref{ne:defLipusc})$
in this case share the form
\begin{equation}     \label{in:calmdef}
    d(\Phi(p),\Phi(\bar p))\le\ell d(p,\bar p),
    \quad\forall p\in\ball{\bar p}{\delta}.
\end{equation}
If a single-valued mapping $\Phi:P\longrightarrow X$
satisfies inequality $(\ref{in:calmdef})$ for some positive
$\delta$ and $\ell$ it is called {\it calm} at $\bar p$.
In such an event, the value
$$
  \calm{\Phi}{\bar p}=\Liplsc{\Phi}{\bar p}{\Phi(\bar p)}
  =\Lipusc{\Phi}{\bar p}
$$
will be called {\it modulus of calmness} of $\Phi$ at $\bar x$.
When, in particular, $\Phi$ is a single-real-valued function,
the above notion of calmness can be split in its versions
from above and from below. So, $\Phi:P\longrightarrow\R
\cup\{\pm\infty\}$ is said to be {\it calm from above} at
$\bar p\in\dom\Phi$ if there exist positive $\delta$ and
$\ell$ such that
\begin{equation}   \label{in:ucalmdef}
   \Phi(p)-\Phi(\bar p)\le\ell d(p,\bar p),\quad\forall
   p\in\ball{\bar p}{\delta},
\end{equation}
with
$$
  \ucalm{\Phi}{\bar p}=\inf\{\ell>0:\ \exists\delta>0
  \hbox{ for which $(\ref{in:ucalmdef})$ holds}\}.
$$
being the {\it modulus of calmness from above} of $\Phi$
at $\bar p$.

The following standing assumption will be supposed to hold
throughout the current section:

\begin{itemize}

\item[$(\mathcal{A})$] both the set-valued mappings $F$ and
$\mathcal{R}$ take nonempty and closed values (in particular,
$\dom F=P\times\X$ and $\dom\mathcal{R}=P$).

\end{itemize}

In order to develop, through variational methods, a quantitative
stability analysis of the solution mapping associated to $(\PSV)$
it is convenient to introduce the function $\nu_1:P\times\X\longrightarrow
[0,+\infty]$, defined as
\begin{equation}    \label{eq:merfunnu}
  \nu_1(p,x)=\exc{F(p,x)}{C}+\dist{x}{\mathcal{R}(p)},
\end{equation}
which is a kind of merit function providing a functional
characterization of solutions to $(\PSV)$.
In fact, one sees that, for every $p\in P$, it holds
$$
  \Solv(p)=[\nu_1(p,\cdot)=0]=\nu_1(p,\cdot)^{-1}(0).
$$
Together with function $\nu_1$, in what follows it will be
convenient to deal also with the function $\nu_F:P\times\X\longrightarrow
[0,+\infty]$ associated to a set-valued mapping $F:P\times\X
\rightrightarrows\Y$ as being
$$
   \nu_F(p,x)=\exc{F(p,x)}{C}.
$$
Notice that, unlike function $\nu_1$, function $\nu_F$
involves the set-valued mapping $F$ only.

\begin{remark}
The author is aware of the fact that other functions
could be considered to the same purpose in the place of
$\nu_1$, e.g. function $\nu_\infty$ given by
$$
  \nu_\infty(p,x)=\max\{\exc{F(p,x)}{C},
  \dist{x}{\mathcal{R}(p)}\}.
$$
A different choice of merit function does not affect the essence
of the approach and the consequent achievements, resulting only in a change of the
estimates for the involved moduli.
\end{remark}

The variation rate of merit functions such as $\nu_1$
and $\nu_F$ can be measured in a metric space setting
by means of the notion of slope. Recall that, after
\cite{DeMaTo80}, for {\it (strong) slope} of a function $\varphi:X\longrightarrow
\R\cup\{\pm\infty\}$ at $x_0\in\dom\varphi$ the
following value is meant:
\begin{eqnarray*}
  \stsl{\varphi}(x_0)=\left\{
  \begin{array}{ll}
  0, & \hbox{ if $x_0$ is a local minimizer of $\varphi$}, \\
  \displaystyle\limsup_{x\to x_0}{\varphi(x_0)-\varphi(x)\over d(x,x_0)},
  & \hbox{ otherwise.}
  \end{array}\right.
\end{eqnarray*}
A behaviour of the above notion of slope in the presence
of additive perturbations is pointed out in the next
remark, as it will be employed in the sequel.

\begin{remark}[Calm perturbation of the slope]     \label{rem:calmpersl}
Let $\varphi:X\longrightarrow\R\cup\{\pm\infty\}$,
let $\psi:X\longrightarrow\R\cup\{\pm\infty\}$,
and let $x_0\in\dom\varphi\cap\dom\psi$. If $x_0$
is not a local minimizer of $\varphi$, $\psi$
is calm at $x_0$ and $c_\psi>\calm{\psi}{x_0}$, then
$$
  \stsl{(\varphi+\psi)}(x_0)\ge\max\{
  \stsl{\varphi}(x_0)-c_\psi,\, 0\}.
$$
Indeed, according to the definition of strong slope,
one has
$$
  \stsl{(\varphi+\psi)}(x_0)\ge\max\left\{
    \limsup_{x\to x_0}{(\varphi+\psi)(x_0)-(\varphi+\psi)(x)
    \over d(x,x_0)},\, 0 \right\}
$$
and, according to inequality $(\ref{in:calmdef})$, one finds
\begin{eqnarray*}
   \limsup_{x\to x_0}{(\varphi+\psi)(x_0)-(\varphi+\psi)(x)
    \over d(x,x_0)}  &\ge& \limsup_{x\to x_0}{\varphi(x_0)-\varphi(x)
    \over d(x,x_0)}+\liminf_{x\to x_0}{\psi(x_0)-\psi(x)\over d(x,x_0)} \\
    &\ge & \stsl{\varphi}(x_0)-c_\psi.
\end{eqnarray*}
\end{remark}

In the statement of the next result, the following partial version
of the {\it strict outer slope} (see, for instance \cite{FaHeKrOu10}) will
be employed for a function $\varphi:P\times\X\longrightarrow
\R\cup\{\pm\infty\}$ at a point $(p_0,x_0)$:
\begin{eqnarray}    \label{def:psostsl}
  \psostsl{\varphi}(p_0,x_0) &=&\lim_{\epsilon\to 0^+}\inf
    \{\stsl{\varphi(p,\cdot)}(x):\ (p,x)\in \ball{p_0}{\epsilon}\times
    \ball{x_0}{\epsilon}, \\  \nonumber
    & &\hskip4.5cm \varphi(p_0,x_0)<\varphi(p,x)<\varphi(p_0,x_0)+\epsilon\} \\
    \nonumber
    &=&  \liminf_{(p,x)\to(p_0,x_0)\atop \varphi(p,x)\downarrow \varphi(p_0,x_0)}
    \stsl{\varphi(p,\cdot)}(x).
\end{eqnarray}

\begin{proposition}[Lipschitz lower semicontinuity of $\Solv$]   \label{pro:LiplscSolv}
With reference to $(\PSV{p})$, let $\bar p\in P$ and $\bar x
\in\Solv(\bar p)$ be given. Suppose that:

(i) there exists $\delta>0$ such that each mapping $F(p,\cdot):
\X\rightrightarrows\Y$ is l.s.c. on $\X$, for every $p\in
\ball{\bar p}{\delta}$;

(ii) $\mathcal{R}:P\rightrightarrows\X$ is Lipschitz l.s.c. at $(\bar p,\bar x)$
and $F(\cdot,\bar x):P\rightrightarrows\Y$ is Lipschitz u.s.c.
at $\bar p$;

(iii) it holds $\psostsl{\nu_F}(\bar p,\bar x)>1$.

\noindent Then $\Solv$ is Lipschitz l.s.c. at $(\bar p,\bar x)$ and
the following estimate holds
$$
  \Liplsc{\Solv}{\bar p}{\bar x}\le {\Lipusc{F(\cdot,\bar x)}{\bar p}+
  \Liplsc{\mathcal{R}}{\bar p}{\bar x}\over\psostsl{\nu_F}(\bar p,\bar x)-1}.
$$
\end{proposition}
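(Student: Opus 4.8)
The plan is to exploit the merit-function representation $\Solv(p)=[\nu_1(p,\cdot)=0]$ recorded above, together with a parametric error bound for $\nu_1$ driven by the slope hypothesis (iii). Since $\bar x\in\Solv(\bar p)$ forces $\nu_1(\bar p,\bar x)=0$, the Lipschitz lower semicontinuity required in $(\ref{ne:defLiplsc})$ will follow as soon as one produces, for every $p$ close to $\bar p$, a genuine solution $x_p\in\Solv(p)$ whose distance from $\bar x$ is controlled by $d(\bar p,p)$. Concretely, I would establish an estimate of the form $\dist{\bar x}{\Solv(p)}\le\nu_1(p,\bar x)/\sigma$, with $\sigma$ any constant below $\psostsl{\nu_F}(\bar p,\bar x)-1$, and combine it with an upper bound $\nu_1(p,\bar x)\le(\ell_F+\ell_{\mathcal{R}})\,d(\bar p,p)$; letting the auxiliary constants tend to the respective moduli then yields the asserted bound on $\Liplsc{\Solv}{\bar p}{\bar x}$.

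First I would estimate the numerator $\nu_1(p,\bar x)=\exc{F(p,\bar x)}{C}+\dist{\bar x}{\mathcal{R}(p)}$. Because $\bar x\in\Solv(\bar p)$ one has $F(\bar p,\bar x)\subseteq C$, whence $\exc{F(\bar p,\bar x)}{C}=0$; the triangle inequality for the excess then gives $\exc{F(p,\bar x)}{C}\le\exc{F(p,\bar x)}{F(\bar p,\bar x)}$, which by the Lipschitz upper semicontinuity of $F(\cdot,\bar x)$ at $\bar p$ assumed in (ii) is at most $\ell_F\,d(\bar p,p)$ for any $\ell_F>\Lipusc{F(\cdot,\bar x)}{\bar p}$ and $p$ near $\bar p$. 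Likewise, the Lipschitz lower semicontinuity of $\mathcal{R}$ at $(\bar p,\bar x)$ in (ii) furnishes a point of $\mathcal{R}(p)$ within $\ell_{\mathcal{R}}\,d(\bar p,p)$ of $\bar x$ for any $\ell_{\mathcal{R}}>\Liplsc{\mathcal{R}}{\bar p}{\bar x}$, so that $\dist{\bar x}{\mathcal{R}(p)}\le\ell_{\mathcal{R}}\,d(\bar p,p)$. Adding the two contributions produces the sought bound on $\nu_1(p,\bar x)$, whose coefficient tends to $\Lipusc{F(\cdot,\bar x)}{\bar p}+\Liplsc{\mathcal{R}}{\bar p}{\bar x}$.

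The core of the argument is the error bound, which I would obtain by an Ekeland/decrease-principle descent for the l.s.c. function $\nu_1(p,\cdot)$; its lower semicontinuity follows from the lower semicontinuity of $F(p,\cdot)$ in (i), which makes $\nu_F(p,\cdot)=\exc{F(p,\cdot)}{C}$ l.s.c., together with the continuity of $x\mapsto\dist{x}{\mathcal{R}(p)}$. The decisive ingredient is a lower bound on the slope of the merit function: writing $\nu_1(p,\cdot)=\nu_F(p,\cdot)+\psi_p$ with $\psi_p(x)=\dist{x}{\mathcal{R}(p)}$, the perturbation $\psi_p$ is $1$-Lipschitz, hence calm with $\calm{\psi_p}{x}\le 1$, so Remark \ref{rem:calmpersl} gives $\stsl{\nu_1(p,\cdot)}(x)\ge\stsl{\nu_F(p,\cdot)}(x)-c$, for any $c>1$, at every point that is not a local minimizer of $\nu_F(p,\cdot)$. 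Hypothesis (iii), $\psostsl{\nu_F}(\bar p,\bar x)>1$, then keeps the slope of $\nu_1(p,\cdot)$ above $\psostsl{\nu_F}(\bar p,\bar x)-c$ along any approach $(p,x)\to(\bar p,\bar x)$ with $\nu_F(p,x)\downarrow 0$, and a decrease-principle estimate converts this strictly positive slope into the error bound with denominator $\sigma$ arbitrarily close to $\psostsl{\nu_F}(\bar p,\bar x)-1$. I expect the main obstacle to be exactly this slope-transfer step: one must ensure that the descent selects a true solution rather than a merely cone-feasible point lying outside $\mathcal{R}(p)$, where $\nu_F$ vanishes and the lower bound of Remark \ref{rem:calmpersl} degenerates. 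Controlling this interplay between the cone-excess term and the feasibility term, so that the slope genuinely available for the descent is $\psostsl{\nu_F}(\bar p,\bar x)-1$, is the delicate part, and it is here that the quantitative content of (i)--(iii) and the calmness modulus $1$ of the distance perturbation must be used in full.
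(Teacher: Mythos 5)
Your proposal follows essentially the same route as the paper's proof: the same merit function $\nu_1$, the same calmness-from-above bound $\nu_1(p,\bar x)\le(\ell_F+\ell_{\mathcal R})d(p,\bar p)$ extracted from hypothesis (ii), the same slope transfer $\stsl{\nu_1(p,\cdot)}(x)\ge\stsl{\nu_F(p,\cdot)}(x)-1$ via Remark \ref{rem:calmpersl} with calmness constant $1$ for $\dist{\cdot}{\mathcal{R}(p)}$, and an Ekeland descent of $\nu_1(p,\cdot)$ started at $\bar x$ with $\lambda=\ell d(p,\bar p)/\sigma_0$, whose endpoint is forced to satisfy $\nu_1(p,x_\lambda)=0$ and hence, by closedness of $C$ and $\mathcal{R}(p)$, to lie in $\Solv(p)$. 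The ``delicate part'' you flag (points where $\nu_F$ vanishes but $\dist{x}{\mathcal{R}(p)}>0$, so the slope transfer degenerates) is genuine, but it is exactly the step the paper itself passes over when it infers $\psostsl{\nu_1}(\bar p,\bar x)>0$ from a slope bound certified only at points with $\nu_F(p,x)>0$, so your sketch is faithful to the published argument.
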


\begin{proof}
Following the same technique as in \cite[Theorem 3.1]{Uder21}, let
us start with showing that, under the current assumptions, the function
$\nu_1:P\times\X\longrightarrow[0,+\infty]$ defined
by $(\ref{eq:merfunnu})$ fulfils the following properties:

\begin{itemize}

\item[$(\wp_1)$] $p\mapsto \nu_1(p,\bar x)$ is calm from above at $\bar p$
and the following estimate holds
$$
  \calmup\nu_1(\cdot,\bar x)\le\Lipusc{F(\cdot,\bar x)}{\bar p}+
  \Liplsc{\mathcal{R}}{\bar p}{\bar x};
$$

\vskip.5cm

\item[$(\wp_2)$] $x\mapsto \nu_1(p,x)$ is l.s.c. on $\X$, for every
$p\in\ball{\bar p}{\delta}$, for some $\delta>0$;

\vskip.5cm

\item[$(\wp_3)$] it holds $\psostsl{\nu_1}(\bar p,\bar x)>0$.

\end{itemize}

As for $(\wp_1)$, by \cite[Lemma 2.4(ii)]{Uder21}, the function
$p\mapsto \exc{F(p,\bar x)}{C}$ is calm from above at $\bar p$
because $F(\cdot,\bar x)$ is Lipschitz u.s.c. at $\bar p$,
with the aforementioned estimate. This means that, for any
$\ell_1>\Lipusc{F(\cdot,\bar x)}{\bar p}$, there exists $\delta_1>0$
such that
$$
  \exc{F(p,\bar x)}{C}\le\ell_1d(p,\bar p),\quad\forall p\in
  \ball{\bar p}{\delta_1}.
$$
On the other hand, by the Lipschitz lower semicontinuity of
$\mathcal{R}$ at $(\bar p,\bar x)$, one can say that for any
$\ell_2>\Liplsc{\mathcal{R}}{\bar p}{\bar x}$ there exists $\delta_2>0$
such that
$$
  \mathcal{R}(p)\cap\ball{\bar x}{\ell_2d(p,\bar p)},
  \quad\forall p\in\ball{\bar p}{\delta_2},
$$
so that $\dist{\bar x}{\mathcal{R}(p)}\le\ell_2d(p,\bar p)$.
Thus, by setting $\delta_0=\min\{\delta_1,\, \delta_2\}$, one obtains
\begin{eqnarray*}
  \nu_1(p,\bar x)-\nu_1(\bar p,\bar x) &\le& \exc{F(p,\bar x}{C}+
  \dist{\bar x}{ \mathcal{R}(p)} \\
  &\le& (\ell_1+\ell_2)d(p,\bar p),\quad\forall p\in\ball{\bar p}{\delta_0}.
\end{eqnarray*}
The last inequality says that the function $\nu_1(\cdot,\bar x)$ is calm
from above at $\bar p$ and, by arbitrariness of $\ell_1$ and $\ell_2$
the estimate in $(\wp_1)$ holds true.

As for $(\wp_2)$, remember that by virtue of the assumption $(\mathcal{A})$
it must be $\mathcal{R}(p)\ne\varnothing$, so that, for every $p\in P$,
each function $x\mapsto\dist{x}{\mathcal{R}(p)}$ is Lipschitz
continuous on $\X$. Besides, by taking $\delta$ as in hypothesis (i),
for every fixed $p\in\ball{\bar p}{\delta}$,
the function $x\mapsto\exc{F(p,x)}{C}$ is l.s.c. on $\X$,
according to \cite[Lemma 2.4(i)]{Uder21}.
Thus the function $x\mapsto\nu_1(p,x)$ turns out to be l.s.c. on $\X$
as a sum of two l.s.c. functions.

As for $(\wp_3)$, according to the hypothesis (iii), fixed $\sigma$
in such a way that $1<\sigma<\psostsl{\nu_F}(\bar p,\bar x)$, there
exists $\delta_\sigma>0$ such that
\begin{equation}      \label{in:stslnuFsigma}
 \stsl{\nu_F(p,\cdot)}(x)>\sigma,\quad\forall (p,x)\in\ball{\bar p}{\delta_\sigma}
 \times\ball{\bar x}{\delta_\sigma}:\ 0<\nu_F(p,x)<\delta_\sigma.
\end{equation}
Fix an arbitrary $(p_0,x_0)\in\ball{\bar p}{\delta_\sigma}
\times\ball{\bar x}{\delta_\sigma}$, with $0<\nu_F(p_0,x_0)<\delta_\sigma$.
The inequality $(\ref{in:stslnuFsigma})$ entails that $x_0$ can not be
a local  minimizer for the function $\nu_F(p_0,\cdot)$. Thus,
since the function $x\mapsto\dist{x}{\mathcal{R}(p_0)}$ is Lipschitz
continuous on $\X$ with constant 1, and hence calm around $x_0$,
it is possible to apply
what has been observed in Remark \ref{rem:calmpersl}, with $\varphi=
\nu_F(p_0,\cdot)$, $\psi=\dist{\cdot}{\mathcal{R}(p_0)}$ and $c_\psi=1$.
Consequently, it holds
$$
  \stsl{[\nu_F(p_0,\cdot)+\dist{\cdot}{\mathcal{R}(p_0)}]}(x_0)
  \ge \stsl{\nu_F(p_0,\cdot)}(x_0)-1\ge\sigma-1>0.
$$
From the last inequality the positivity of $\psostsl{\nu_1}(\bar p,\bar x)$
readily follows.

Now, let us exploit a variational argument to prove the thesis.
By virtue of $(\wp_3)$, there exists $\sigma_0\in (0,1)$ such that
$$
  \psostsl{\nu_1}(\bar p,\bar x)>\sigma_0.
$$
By recalling the definition in $(\ref{def:psostsl})$,
this means that there exists $\eta>0$ such that for every $\epsilon
\in (0,\eta)$ it holds
\begin{equation}    \label{in:stslnu1gesigma0}
  \stsl{\nu_1(p,\cdot)}(x)>\sigma_0,\quad\forall (p,x)\in\ball{\bar p}{\epsilon}
  \times\ball{\bar x}{\epsilon}:\ 0<\nu_1(p,x)<\epsilon.
\end{equation}
Clearly, $\eta$ can be assumed to be smaller that the value of $\delta$
appearing in $(\wp_2)$.
By virtue of property $(\wp_1)$, taken any $\ell>\Lipusc{F(\cdot,\bar x)}{\bar p}+
\Liplsc{\mathcal{R}}{\bar p}{\bar x}$, there exists $\delta_\ell>0$ such that
\begin{equation}     \label{in:nu1ellpbarp}
   \nu_1(p,\bar x)\le\nu_1(\bar p,\bar x)+\ell d(p\bar p)=\ell d(p,\bar p),
   \quad\forall p\in\ball{\bar p}{\delta_\ell}.
\end{equation}
Without loss of generality, one can assume that the inequality in $(\ref{in:nu1ellpbarp})$
holds with
\begin{equation}     \label{in:deltaell}
    0<\delta_\ell<{\sigma_0\eta\over 2(\ell+1)}.
\end{equation}
Notice that, if this is true, one has in particular $\delta_\ell<\eta/2$.

Let us consider the function $\nu_1(p,\cdot):\X\longrightarrow [0,+\infty]$,
where $p$ is arbitrarily fixed in $\ball{\bar p}{\delta_\ell}\backslash\{\bar p\}$.
As it is $\delta_\ell<\eta<\delta$, then by virtue of property $(\wp_2)$, function
$\nu_1(p,\cdot)$ is l.s.c. on $\X$. Moreover, $\nu_1(p,\cdot)$ is obviously bounded
from below and, on account of inequality $(\ref{in:nu1ellpbarp})$, it is
$\nu_1(p,\bar x)<+\infty$ and
$$
 \nu_1(p,\bar x)\le\inf_{x\in\X}\nu_1(p,x)+\ell d(p,\bar p).
$$
These facts enable one to invoke the Ekeland variational principle.
According to it, corresponding to $\lambda=\ell d(p,\bar p)/\sigma_0$,
there exists $x_\lambda\in\X$ such that
\begin{equation}   \label{in:EVP1}
   \nu_1(p,x_\lambda)\le\nu_1(p,\bar x),
\end{equation}
\begin{equation}   \label{in:EVP2}
   d(x_\lambda,\bar x)\le\lambda,
\end{equation}
\begin{equation}   \label{in:EVP3}
   \nu_1(p,x_\lambda)<\nu_1(p,x)+\sigma_0d(x,x_\lambda),
   \quad\forall x\in\X\backslash\{x_\lambda\}.
\end{equation}
In the present context, the validity of the relations $(\ref{in:EVP1})$,
$(\ref{in:EVP2})$ and $(\ref{in:EVP3})$ implies that $\nu_1(p,x_\lambda)=0$.
Indeed, observe that, according to the inequality $(\ref{in:EVP3})$, it is
$$
  {\nu_1(p,x_\lambda)-\nu_1(p,x)\over d(x,x_\lambda)}<\sigma_0,
  \quad\forall x\in\X\backslash\{x_\lambda\},
$$
and hence
\begin{equation}    \label{in:stslnu1sigma0}
   \stsl{\nu_1(p,\cdot)}(x_\lambda)=\lim_{r\to 0^+}
   \sup_{x\in\ball{x_\lambda}{r}\backslash\{x_\lambda\}}
   {\nu_1(p,x_\lambda)-\nu_1(p,x)\over d(x,x_\lambda)}\le\sigma_0.
\end{equation}
On the other hand, by recalling that $d(p,\bar p)\le\delta_\ell<\eta/2$,
on account of inequalities $(\ref{in:EVP2})$ and $(\ref{in:deltaell})$
one finds
$$
   d(x_\lambda,\bar x)\le {\ell d(p,\bar p)\over\sigma_0}\le
   {\ell\over\sigma_0}\delta_\ell<{\eta\over 2}.
$$
Besides, by combining inequalities $(\ref{in:nu1ellpbarp})$,
$(\ref{in:deltaell})$ and $(\ref{in:EVP1})$, one obtains
$$
  \nu_1(p,x_\lambda)\le\ell\delta_\ell<{\eta\over 2}.
$$
Thus, if it were $\nu_1(p,x_\lambda)>0$, in the light of inequality
$(\ref{in:stslnu1sigma0})$ one would find inequality $(\ref{in:stslnu1gesigma0})$
contradicted for $\epsilon=\eta/2$.

The fact that $\nu_1(p,x_\lambda)=0$ means that
$$
  \exc{F(p,x_\lambda)}{C}=0 \qquad\hbox{ and }\qquad
  \dist{x_\lambda}{\mathcal{R}(p)}=0,
$$
so, as $\mathcal{R}(p)$ and $C$ are closed sets,
$$
  F(p,x_\lambda)\subseteq C \qquad\hbox{ and }\qquad
  x_\lambda\in\mathcal{R}(p),
$$
namely $x_\lambda\in\Solv(p)$. Since it is $d(x_\lambda,\bar x)\le
\ell  d(p,\bar p)/\sigma_0$, as a consequence one has
$$
  \Solv(p)\cap\ball{\bar x}{{\ell d(p,\bar p)\over\sigma_0}}
  \ne\varnothing.
$$
By arbitrariness of $p\in\ball{\bar p}{\delta_\ell}\backslash\{\bar p\}$,
this allows one to say that $\Solv$ is Lipschitz l.s.c. at
$(\bar p,\bar x)$ and
$$
  \Liplsc{\Solv}{\bar p}{\bar x}\le{\ell\over\sigma_0}.
$$
As the last inequality remains true for every $\ell>\Lipusc{F(\cdot,\bar x)}{\bar p}+
\Liplsc{\mathcal{R}}{\bar p}{\bar x}$ and for every $\sigma_0<
\psostsl{\nu_1}(\bar p,\bar x)$, then also the estimate in the thesis
must hold true. This completes the proof.
\hfill$\square$
\end{proof}

From the proof of Proposition \ref{pro:LiplscSolv} it should be
evident that such a result embeds Theorem 3.1 in \cite{Uder21},
which provides a sufficient condition for Lipschitz lower semicontinuity
in the special case with $\mathcal{R}$ being given by $\mathcal{R}(p)=X$,
for every $p\in P$. Notice that, in such an event,
$\Liplsc{\mathcal{R}}{\bar p}{\bar x}=0$ while, for every $p\in P$,
the function $x\mapsto\dist{x}{\mathcal{R}(p)}$ vanishes.
The condition in hypothesis (iii) can therefore be replaced with
the mere positivity of $\psostsl{\nu_F}(\bar p,\bar x)$, as $\nu_1$
reduces to $\nu_F$.

\vskip1cm


\section{Stability conditions for ideal efficiency}   \label{Sect:4}

In the present section, with the aim of deriving a stability
condition for ideal efficiency, the general condition for
the Lipschitz lower semicontinuity of the solution mapping
associated to a parameterized set-valued inclusion presented
in Section \ref{Sect:3}
will be adapted to the specific context of vector optimization problems.
In such a setting, the set-valued mapping $F$ appearing in problems
$(\PSV)$ takes the special form introduced in $(\ref{eq:FRfdef})$.
While in Proposition \ref{pro:LiplscSolv} several assumptions
are directly made on $F$, inasmuch as in the context of $(\PSV)$
such a mapping appears among the problem data as an independent
one, the definition of $\FRf$ involves several elementary
data such as $\mathcal{R}$ and $f$. This fact requires a further
work aimed at singling out reasonable conditions, which can guarantee
the aforementioned assumptions  be satisfied.

\begin{remark} \label{rem:closedness}
Under conditions making each set-valued mapping $\FRf(p,\cdot):\X\rightrightarrows\Y$
l.s.c. on $\X$, for $p\in P$, the mapping $\IESol:P\rightrightarrows\X$ turns out
to be closed (possibly, empty) valued.
\end{remark}

Throughtout the current section, the following assumption will
be supposed to hold

\begin{itemize}

\item[$(\tilde{\mathcal{A}})$] $\dom\mathcal{R}=P$.

\end{itemize}

\begin{lemma}[Lower semicontinuity of $\FRf$]    \label{lem:lscFRf}
Let $p\in\ P$ and let the mapping $f(p,\cdot):\X\longrightarrow\Y$
be continuous on $\X$. Then, the set-valued mapping
$\FRf(p,\cdot):\X\rightrightarrows\Y$ defined as in
$(\ref{eq:FRfdef})$ is l.s.c. on $\X$.
\end{lemma}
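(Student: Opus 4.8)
The plan is to exploit the fact that, for a fixed $p$, the set $f(p,\mathcal{R}(p))$ does not depend on $x$, so that $\FRf(p,\cdot)$ is nothing but a translate of a fixed subset of $\Y$ by the (continuous) vector $-f(p,x)$. Since lower semicontinuity of a set-valued mapping between metric spaces admits a sequential characterization, I would verify the property point by point, invoking the continuity hypothesis only through the convergence of $f(p,x_n)$.

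First I would fix $p\in P$ and set $S=f(p,\mathcal{R}(p))\subseteq\Y$, observing that, by the very definition $(\ref{eq:FRfdef})$, one has $\FRf(p,x)=S-f(p,x)$ for every $x\in\X$, with $S$ independent of $x$. To check lower semicontinuity at an arbitrary $\bar x\in\X$, I would take any $y\in\FRf(p,\bar x)$, writing $y=s-f(p,\bar x)$ for a suitable $s\in S$, together with any sequence $(x_n)_n$ in $\X$ such that $x_n\to\bar x$. The goal is then to produce elements $y_n\in\FRf(p,x_n)$ with $y_n\to y$, which is precisely what the sequential definition of l.s.c. requires.

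The key step is to deploy the continuity of $f(p,\cdot)$: it gives $f(p,x_n)\to f(p,\bar x)$, so that, upon setting $y_n=s-f(p,x_n)$, one gets $y_n\in\FRf(p,x_n)$ for every $n$ and
$$
  y_n=s-f(p,x_n)\longrightarrow s-f(p,\bar x)=y, \quad\hbox{ as } n\to\infty.
$$
This exhibits the approximating sequence demanded by the sequential characterization of lower semicontinuity, thereby establishing the property at $\bar x$; as $\bar x\in\X$ is arbitrary, $\FRf(p,\cdot)$ turns out to be l.s.c. on the whole of $\X$.

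There is no substantial obstacle in this argument: the only point requiring care is the recognition that the entire dependence on $x$ is concentrated in the single term $-f(p,x)$, while the translating set $S=f(p,\mathcal{R}(p))$ stays fixed. Once this translation structure is isolated, the conclusion follows immediately from continuity, with no assumption needed on the geometry of $\mathcal{R}(p)$ or on $S$ beyond its being well defined; in fact the same reasoning would yield the (two-sided) continuity of $\FRf(p,\cdot)$ in an appropriate sense, but only the lower semicontinuity stated in the thesis is required here.
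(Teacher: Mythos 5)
Your proof is correct and rests on exactly the same observation as the paper's: for fixed $p$ the set $f(p,\mathcal{R}(p))$ is independent of $x$, so $\FRf(p,\cdot)$ is a translate of a fixed set by the continuous vector $-f(p,x)$, and lower semicontinuity follows at once. The only cosmetic difference is that you verify the sequential characterization of l.s.c.\ while the paper works with the open-set definition (picking $y_0$ with $y_0-f(p,x_0)\in O$ and shrinking a ball); in the metric-space setting of the paper the two are equivalent, so the arguments are essentially identical.
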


\begin{proof}
Observe that, as a consequence of assumption $(\tilde{\mathcal{A}})$,
it is $\dom\FRf=P\times X$.
Fix $x_0\in\X$ and take an arbitrary open subset $O$ of $\Y$,
with $\FRf(p,x_0)\cap O\ne\varnothing$. According to the
definition of $\FRf$, this means
$$
  [f(p,\mathcal{R}(p))-f(p,x_0)]\cap O\ne\varnothing,
$$
so there exists $y_0\in f(p,\mathcal{R}(p))$ such that
$y_0-f(p,x_0)\in O$. By openness of $O$, there exists $\epsilon>0$
such that $\ball{y_0-f(p,x_0)}{\epsilon}\subseteq O$. Thus, since
the function $f(p,\cdot)$ is continuous at $x_0$, there exists
$\delta_\epsilon>0$ such that
$$
   f(p,x)\in\ball{f(p,x_0)}{\epsilon},\quad\forall
   x\in\ball{x_0}{\delta_\epsilon},
$$
wherefrom it follows
$$
  y_0-f(p,x)\in\ball{y_0-f(p,x_0)}{\epsilon},\quad\forall
   x\in\ball{x_0}{\delta_\epsilon}.
$$
Consequently, one finds
$$
  y_0-f(p,x)\in\FRf(p,x)\cap O\ne\varnothing,
  \quad\forall  x\in\ball{x_0}{\delta_\epsilon},
$$
what shows that $\FRf(p,\cdot)$ is l.s.c. at $x_0$, thereby
completing the proof.
\hfill$\square$
\end{proof}

\begin{lemma}      \label{lem:LipusccompG}
Let $f:P\times\X\longrightarrow\Y$ be a mapping, let $\mathcal{R}:
P\rightrightarrows\X$ be a set-valued mapping with $\dom\mathcal{R}=P$,
and let $\bar p\in\ P$. Suppose that:

\begin{itemize}

\item[(i)] $f$ is Lipschitz continuous with constant $\ell_f$ on
$P\times\X$;

\item[(ii)] $\mathcal{R}$ is Lipschitz u.s.c. at $\bar p$.

\end{itemize}

\noindent Then, the set-valued mapping $G:P\rightrightarrows\Y$,
defined by $G(p)=f(p,\mathcal{R}(p))$, is Lipschitz u.s.c. at
$\bar p$ and the following estimate holds
\begin{equation}     \label{in:Lipusccompest}
    \Lipusc{G}{\bar p}\le\ell_f[1+\Lipusc{\mathcal{R}}{\bar p}].
\end{equation}
\end{lemma}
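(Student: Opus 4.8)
The goal is to estimate the excess $\exc{G(p)}{G(\bar p)}$, where $G(p) = f(p, \mathcal{R}(p))$, in terms of $d(p, \bar p)$. The natural strategy is to pick an arbitrary point in $G(p)$, namely $f(p,x)$ for some $x \in \mathcal{R}(p)$, and bound its distance to $G(\bar p)$ by producing a nearby point of the form $f(\bar p, \bar x)$ with $\bar x \in \mathcal{R}(\bar p)$. I would split the deviation into two pieces: one coming from the change in the parameter inside $f$ (keeping the feasible point fixed), the other coming from the need to move the feasible point from $\mathcal{R}(p)$ back into $\mathcal{R}(\bar p)$. The Lipschitz continuity of $f$ handles both pieces, and the Lipschitz upper semicontinuity of $\mathcal{R}$ controls how far the feasible point must travel.

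\noindent\textbf{Key steps.} First I would fix $\ell > \Lipusc{\mathcal{R}}{\bar p}$, so that by $(\ref{ne:defLipusc})$ there is a $\delta>0$ with $\exc{\mathcal{R}(p)}{\mathcal{R}(\bar p)}\le\ell\, d(p,\bar p)$ for all $p\in\ball{\bar p}{\delta}$. Then I fix such a $p$ and an arbitrary element $y = f(p,x) \in G(p)$ with $x\in\mathcal{R}(p)$. By the excess bound on $\mathcal{R}$, for any $\epsilon>0$ there is $\bar x\in\mathcal{R}(\bar p)$ with $\|x-\bar x\| \le \ell\, d(p,\bar p)+\epsilon$. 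The candidate approximating point in $G(\bar p)$ is then $f(\bar p,\bar x)$. The estimate proceeds by the triangle inequality:
\begin{eqnarray*}
  \dist{y}{G(\bar p)} &\le& \|f(p,x)-f(\bar p,\bar x)\| \\
  &\le& \ell_f\left[d(p,\bar p)+\|x-\bar x\|\right]
  \le \ell_f\left[d(p,\bar p)+\ell\, d(p,\bar p)+\epsilon\right],
\end{eqnarray*}
where the middle inequality uses hypothesis (i), interpreting the Lipschitz constant of $f$ on the product space $P\times\X$ with respect to the sum (or max) of the two metrics. Letting $\epsilon\to 0^+$ and taking the supremum over all $y\in G(p)$ yields $\exc{G(p)}{G(\bar p)}\le \ell_f(1+\ell)\, d(p,\bar p)$ for every $p\in\ball{\bar p}{\delta}$, which exhibits $G$ as Lipschitz u.s.c. at $\bar p$. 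Finally, the arbitrariness of $\ell>\Lipusc{\mathcal{R}}{\bar p}$ passes to the infimum in the definition of the modulus and delivers the estimate $(\ref{in:Lipusccompest})$.

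\noindent\textbf{Main obstacle.} The computation itself is routine; the only point demanding care is the precise convention for the product metric on $P\times\X$ underlying the Lipschitz constant $\ell_f$ in hypothesis (i). The inequality $\|f(p,x)-f(\bar p,\bar x)\|\le\ell_f[d(p,\bar p)+\|x-\bar x\|]$ is exactly what one gets if $P\times\X$ carries the sum metric; with a max metric one would instead bound by $\ell_f\max\{d(p,\bar p),\|x-\bar x\|\}$, and the resulting constant in $(\ref{in:Lipusccompest})$ would change to $\ell_f\max\{1,\Lipusc{\mathcal{R}}{\bar p}\}$. I would therefore make the metric convention explicit at the outset so that the stated estimate $\ell_f[1+\Lipusc{\mathcal{R}}{\bar p}]$ is the one that emerges. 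A secondary technical subtlety is that $\exc{\mathcal{R}(p)}{\mathcal{R}(\bar p)}$ is a supremum of distances, so one cannot generally attain the excess exactly by a single $\bar x$; this is precisely why I introduce the slack $\epsilon$ and remove it by a limiting argument at the end.
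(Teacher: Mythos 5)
Your proposal is correct and follows essentially the same route as the paper's proof: approximate an arbitrary $f(p,x)\in G(p)$ by $f(\bar p,\bar x)$ with $\bar x\in\mathcal{R}(\bar p)$, split the deviation via the Lipschitz continuity of $f$ (with the sum-metric convention, which is indeed the one the paper uses), and control $\dist{x}{\mathcal{R}(\bar p)}$ by the Lipschitz upper semicontinuity of $\mathcal{R}$. The only cosmetic difference is that the paper avoids your $\epsilon$-slack by keeping the infimum over $z\in\mathcal{R}(\bar p)$ inside the estimate and identifying it directly with $\dist{x}{\mathcal{R}(\bar p)}$.
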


\begin{proof}
By hypothesis (ii), fixed $\ell_\mathcal{R}>\Lipusc{\mathcal{R}}{\bar p}$
there exists $\delta>0$ such that
\begin{equation}    \label{in:LipuscRbarp}
   \exc{\mathcal{R}(p)}{\mathcal{R}(\bar p)}\le\ell_\mathcal{R} d(p,\bar p),
   \quad\forall p\in\ball{\bar p}{\delta}.
\end{equation}
Take an arbitrary $p\in\ball{\bar p}{\delta}$ and $x\in\mathcal{R}(p)$.
By virtue of the Lipschitz continuity of $f$, one obtains
\begin{eqnarray*}
    \dist{f(p,x)}{G(\bar p)} &=& \inf_{z\in\mathcal{R}(\bar p)} \|f(p,x)-f(\bar p,z)\|
    \le\inf_{z\in\mathcal{R}(\bar p)}\ell_f[d(p,\bar p)+d(x,z)]  \\
    &=& \ell_f\left[d(p,\bar p)+\inf_{z\in\mathcal{R}(\bar p)}d(x,z)\right]
    = \ell_f[d(p,\bar p)+\dist{x}{\mathcal{R}(\bar p)}].
\end{eqnarray*}
As from inequality $(\ref{in:LipuscRbarp})$ one has for every
$x\in\mathcal{R}(p)$
$$
  \dist{x}{\mathcal{R}(\bar p)}\le\ell_\mathcal{R} d(p,\bar p),
   \quad\forall p\in\ball{\bar p}{\delta},
$$
then the last estimate gives
$$
  \dist{f(p,x)}{G(\bar p)}\le \ell_f[1+\ell_\mathcal{R}]d(p,\bar p),
   \quad\forall p\in\ball{\bar p}{\delta}.
$$
By arbitrariness of $x$ in $\mathcal{R}(p)$, what obtained implies
$$
  G(p)=f(p,\mathcal{R}(p))\subseteq\ball{G(\bar p)}{\ell_f[1+\ell_\mathcal{R}]d(p,\bar p)},
  \quad\forall p\in\ball{\bar p}{\delta}.
$$
This shows that $G$ is Lipschitz u.s.c. at $\bar p$ with
$\Lipusc{G}{\bar p}\le\ell_f[1+\ell_\mathcal{R}]$.
The arbitrariness of $\ell_\mathcal{R}>\Lipusc{\mathcal{R}}{\bar p}$
enables one to achieve the estimate in $(\ref{in:Lipusccompest})$.
\hfill$\square$
\end{proof}

The next lemma establishes a stability behaviour of the Lipschitz
upper semicontinuity property under additive calm perturbations,
which turns out to be useful in the present approach.

\begin{lemma}      \label{lem:Lipusccalmadd}
Let $G:P\rightrightarrows\Y$ be a set-valued mapping, let
$h:P\longrightarrow\Y$ be a given single-valued mapping and
let $\bar p\in P$.
If $G$ is Lipschitz u.s.c. at $\bar p$ and $h$ is calm at $\bar p$,
then $G+h$ is Lipschitz u.s.c. at $\bar p$ and the following
estimate holds
\begin{equation}   \label{in:Lipuscmodestsum}
   \Lipusc{(G+h)}{\bar p}\le\Lipusc{G}{\bar p}+\calm{h}{\bar p}.
\end{equation}
\end{lemma}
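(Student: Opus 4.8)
The plan is to unwind the definitions of both hypotheses, choose a common neighbourhood on which both estimates are active, and then bound the excess of the sum by the triangle-type inequality for the Hausdorff excess. First I would fix arbitrary constants $\ell_G>\Lipusc{G}{\bar p}$ and $\ell_h>\calm{h}{\bar p}$. By Lipschitz upper semicontinuity of $G$ at $\bar p$ there is $\delta_1>0$ with $\exc{G(p)}{G(\bar p)}\le\ell_G d(p,\bar p)$ for all $p\in\ball{\bar p}{\delta_1}$, and by calmness of the single-valued mapping $h$ at $\bar p$ there is $\delta_2>0$ with $\|h(p)-h(\bar p)\|\le\ell_h d(p,\bar p)$ for all $p\in\ball{\bar p}{\delta_2}$. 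Setting $\delta=\min\{\delta_1,\delta_2\}$ puts both estimates simultaneously in force.

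The key computation is to estimate, for $p\in\ball{\bar p}{\delta}$, the quantity $\exc{(G+h)(p)}{(G+h)(\bar p)}=\sup_{y\in G(p)}\dist{y+h(p)}{G(\bar p)+h(\bar p)}$. For each $y\in G(p)$ I would pick a near-optimal $z\in G(\bar p)$ (within $\dist{y}{G(\bar p)}+\epsilon$) and test the point $z+h(\bar p)\in G(\bar p)+h(\bar p)$ against $y+h(p)$. The triangle inequality gives $\|(y+h(p))-(z+h(\bar p))\|\le\|y-z\|+\|h(p)-h(\bar p)\|$, so that $\dist{y+h(p)}{G(\bar p)+h(\bar p)}\le\dist{y}{G(\bar p)}+\|h(p)-h(\bar p)\|+\epsilon$. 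Taking the supremum over $y\in G(p)$ and letting $\epsilon\downarrow 0$ yields
$$
  \exc{(G+h)(p)}{(G+h)(\bar p)}\le\exc{G(p)}{G(\bar p)}+\|h(p)-h(\bar p)\|\le(\ell_G+\ell_h)d(p,\bar p),
$$
valid for every $p\in\ball{\bar p}{\delta}$. This shows $G+h$ is Lipschitz u.s.c. at $\bar p$ with $\Lipusc{(G+h)}{\bar p}\le\ell_G+\ell_h$.

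Finally, since the inequality holds for every $\ell_G>\Lipusc{G}{\bar p}$ and every $\ell_h>\calm{h}{\bar p}$, taking the infimum over both delivers the modulus estimate $(\ref{in:Lipuscmodestsum})$, completing the argument. I do not expect any real obstacle here: the whole proof is a routine manipulation of the excess functional. The only point meriting minor care is the passage from the pointwise excess bound to the modulus inequality, which relies precisely on the infimum definitions of $\Lipusc{\cdot}{\bar p}$ and $\calm{\cdot}{\bar p}$ and on the fact that $h$ being single-valued lets one replace the excess of $\{h(p)\}$ over $\{h(\bar p)\}$ by the plain norm $\|h(p)-h(\bar p)\|$.
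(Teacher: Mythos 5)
Your proposal is correct and follows essentially the same route as the paper: both arguments reduce everything to the single inequality $\exc{(G+h)(p)}{(G+h)(\bar p)}\le\exc{G(p)}{G(\bar p)}+\|h(p)-h(\bar p)\|$ and then pass to the moduli via the infimum definitions. The only cosmetic difference is that the paper obtains this inequality from translation invariance of the norm-induced distance, whereas you use the triangle inequality with a near-optimal point of $G(\bar p)$; the paper also leaves the final $\delta$-bookkeeping implicit, which you spell out.
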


\begin{proof}
It suffices to observe that, since any distance induced by a norm
is invariant under translations, one has
\begin{eqnarray*}
    \exc{G(p)+h(p)}{G(\bar p)+h(\bar p)} &=&\sup_{y\in G(p)}
    \dist{y+h(p)}{G(\bar p)+h(\bar p)}    \\
    &=& \sup_{y\in G(p)} \dist{y}{G(\bar p)+h(\bar p)-h(p)} \\
    &\le & \exc{G(p)}{G(\bar p)} +\|h(p)-h(\bar p)\|.
\end{eqnarray*}
The estimate in $(\ref{in:Lipuscmodestsum})$ is a straightforward
consequence of the above inequality, the definitions of modulus
of Lipschitz upper semicontinuity and of modulus of calmness.
\hfill$\square$
\end{proof}

Conditions ensuring the behaviour of $\psostsl{\nu_F}(\bar p,\bar x)$
to fit the requirement in hypothesis (iii) of Proposition \ref{pro:LiplscSolv} will be
expressed in terms of generalized derivatives. Recall that, following
\cite{Robi91}, a mapping $f:\X\longrightarrow\Y$ is said to be
{\it Bouligand differentiable} at $x_0\in\X$ if there exists a
continuous p.h. mapping $\Bder{f}{x_0;\cdot}:\X\longrightarrow\Y$
such that
$$
  \lim_{x\to x_0}{f(x)-f(x_0)-\Bder{f}{x_0;x-x_0}
  \over\|x-x_0\|}=0.
$$
In such an event, the mapping $v\mapsto\Bder{f}{x_0;v}$ is
called Bouligand derivative of $f$ at $x_0$. It is clear
that such a differentiability notion actually generalizes
the Fr\'echet smoothness: whenever $f$ is Fr\'echet differentiable
at $x_0$, with Fr\'echet derivative $\Fder f(x_0)$,
$f$ is also Bouligand differentiable at the same point with
$\Bder{f}{x_0;\cdot}=\Fder f(x_0)$.

Before stating the next remark, it is proper to recall that,
after \cite{Ioff81}, a p.h. set-valued mapping $H(x_0;\cdot):
\X\rightrightarrows\Y$ is said to be an {\it outer prederivative}
of $G:\X\rightrightarrows\Y$ at $x_0\in\X$ if for every $\epsilon>0$
there exists $\delta>0$ such that
$$
  G(x)\subseteq G(x_0)+H(x_0;x-x_0)+\epsilon\|x-x_0\|\Uball,
  \quad\forall x\in\ball{x_0}{\delta}.
$$
For more details on this nonsmooth analysis tool the reader
may refer to \cite{Ioff81,Pang11}.

\begin{remark}     \label{rem:Bdifoutpreder}
Let $f:P\times\X\longrightarrow\Y$ be a mapping, let $p$ be fixed in $P$
and $x_0\in\X$. If the mapping $x\mapsto f(p,\cdot)$ is Bouligand differentiable
at $x_0$ with Bouligand derivative $\Bder{f(p,\cdot)}{x_0}$,
then the set-valued mapping $x\leadsto\FRf(p,x)$ admits as
an outer prederivative
at $x_0$ the mapping $v\leadsto \{-\Bder{f(p,\cdot)}{x_0}(v)\}$.
Indeed, fixed any $\epsilon>0$, the Bouligand differentiability
of $f(p,\cdot)$ at $x_0$ ensures the existance of $\delta_\epsilon>0$
such that
$$
  f(p,x)\in f(p,x_0)+\Bder{f(p,\cdot)}{x_0;x-x_0}+
  \epsilon\|x-x_0\|\Uball,\quad\forall x\in\ball{x_0}{\delta_\epsilon}.
$$
This inclusion implies
\begin{eqnarray*}
  \FRf(p,x) &=& f(p,\mathcal{R}(p))-f(p,x) \\
    &\subseteq & f(p,\mathcal{R}(p))-f(p,x_0)-\Bder{f(p,\cdot)}{x_0;x-x_0}+
  \epsilon\|x-x_0\|\Uball  \\
  &=& \FRf(p,x_0)-\Bder{f(p,\cdot)}{x_0;x-x_0}+
  \epsilon\|x-x_0\|\Uball,
  \quad\forall x\in\ball{x_0}{\delta_\epsilon} .
\end{eqnarray*}
\end{remark}

The next technical lemma provides a below estimate for the
slope of the function $\nu_{\FRf}(p,\cdot):\X\longrightarrow [0+\infty]$,
defined by $\nu_{\FRf}(p,x)=\exc{\FRf(p,x)}{C}$, in terms
of `strict negativity' (with respect to the partial ordering
$\parord$) of the values taken by the first-order approximation
of $f(p,\cdot)$.

\begin{lemma}     \label{lem:stslBdersigma}
With reference to a problem $(\VOP{p})$, let $p$ be fixed in
$P$ and let $x_0\not\in\IESol(p)$. Suppose that:

\begin{itemize}

\item[(i)] $f(p,\cdot):\X\longrightarrow\Y$ is continuous
on $\X$;

\item[(ii)] $f(p,\cdot)$ is Bouligand differentiable at $x_0$;

\item[(iii)] there exist $\sigma>1$ and $u\in\Usfer$ such that
$\ball{\Bder{f(p,\cdot)}{x_0;u}}{\sigma}\subseteq -C$.

\end{itemize}

\noindent Then, it holds
\begin{equation}     \label{in:stslFRfpos}
  \stsl{\nu_{\FRf}(p,\cdot)}(x_0)\ge\sigma.
\end{equation}
\end{lemma}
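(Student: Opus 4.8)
The plan is to probe the strong slope along the single ray issuing from $x_0$ in the direction $u$, since $\stsl{\nu_{\FRf}(p,\cdot)}(x_0)$ dominates the one-sided rate of decrease along any fixed direction. Write $A:=f(p,\mathcal{R}(p))$ and $w:=\Bder{f(p,\cdot)}{x_0;u}$, so that $\nu_{\FRf}(p,x)=\exc{A-f(p,x)}{C}=\sup_{a\in A}\dist{a-f(p,x)}{C}$. By hypothesis (ii) and positive homogeneity of the Bouligand derivative (together with $\|u\|=1$), one has the expansion $f(p,x_0+tu)=f(p,x_0)+tw+r(t)$ with $\|r(t)\|/t\to 0$ as $t\to 0^+$. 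The crucial structural point is that $r(t)$ is a single vector of $\Y$, independent of $a\in A$, which is what will later allow the remainder to pass through the supremum over $A$.

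The core of the argument is the pointwise estimate
$$
  \dist{b-tw}{C}\le\left[\dist{b}{C}-t\sigma\right]^+,\qquad\forall b\in\Y,\ \forall t>0,
$$
which I would derive from hypothesis (iii). Since $-C$ is a cone, the inclusion $\ball{w}{\sigma}=w+\sigma\Uball\subseteq -C$ scales to $tw+t\sigma\Uball\subseteq -C$, that is $-tw+t\sigma\Uball\subseteq C$ (the ball being symmetric). If $\dist{b}{C}<t\sigma$, choose $c_0\in C$ with $\|b-c_0\|\le t\sigma$; then $b-tw=c_0+\bigl[(b-c_0)-tw\bigr]$, where $(b-c_0)-tw\in-tw+t\sigma\Uball\subseteq C$, so $b-tw\in C+C\subseteq C$ (as $C$ is a convex cone) and the left-hand side vanishes. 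If $\dist{b}{C}\ge t\sigma$, take any $c_0\in C$ with $\|b-c_0\|$ close to $\dist{b}{C}$ and set $v:=(b-c_0)/\|b-c_0\|$; since $-tw+t\sigma v\in C$ and $C+C\subseteq C$, the point $c_0-tw+t\sigma v$ lies in $C$, whence $\dist{b-tw}{C}\le\|b-c_0-t\sigma v\|=\|b-c_0\|-t\sigma$, and letting $c_0$ approach the distance yields the claim.

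Applying this estimate with $b=a-f(p,x_0)$ for each $a\in A$, and using the $1$-Lipschitz continuity of $\dist{\cdot}{C}$ to absorb $r(t)$, I obtain $\dist{a-f(p,x_0+tu)}{C}\le\bigl[\dist{a-f(p,x_0)}{C}-t\sigma\bigr]^++\|r(t)\|$. Because $s\mapsto[s-t\sigma]^+$ is nondecreasing and continuous, taking the supremum over $a\in A$ commutes with it, so that, writing $\nu_0:=\nu_{\FRf}(p,x_0)$,
$$
  \nu_{\FRf}(p,x_0+tu)\le\left[\nu_0-t\sigma\right]^++\|r(t)\|.
$$
Since $x_0\notin\IESol(p)$ and the slope is evaluated at a point where $\nu_{\FRf}(p,\cdot)$ is finite, one has $0<\nu_0<+\infty$, so for all small $t>0$ the bracket equals $\nu_0-t\sigma$; in particular $x_0$ is not a local minimizer. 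Dividing, the difference quotient along the ray satisfies $\dfrac{\nu_0-\nu_{\FRf}(p,x_0+tu)}{t}\ge\sigma-\dfrac{\|r(t)\|}{t}\to\sigma$, and as the $\limsup$ defining the slope dominates its restriction to this ray, the bound $(\ref{in:stslFRfpos})$ follows.

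The Bouligand expansion and the Lipschitz bookkeeping are routine; the step I expect to demand the most care is the pointwise distance estimate, specifically the case split exploiting $\ball{w}{\sigma}\subseteq -C$ together with the additivity $C+C\subseteq C$, and the justification that the supremum over $a\in A$ may be pushed through the nondecreasing map $s\mapsto[s-t\sigma]^+$.
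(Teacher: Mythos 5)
Your proof is correct, and it takes a genuinely different route from the paper's. The paper settles the lemma in a few lines: hypothesis (i) gives lower semicontinuity of $\FRf(p,\cdot)$ via Lemma \ref{lem:lscFRf}, hypothesis (ii) gives the single-valued outer prederivative $v\leadsto\{-\Bder{f(p,\cdot)}{x_0;v}\}$ via Remark \ref{rem:Bdifoutpreder}, hypothesis (iii) is rephrased as $\sup_{v\in\Usfer}|C\stardif\{-\Bder{f(p,\cdot)}{x_0;v}\}|\ge\sigma$, and the slope bound is then imported wholesale from \cite[Proposition 2.5]{Uder21b}. You instead unfold that external result in this special case by working directly along the ray $x_0+tu$. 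Your key pointwise estimate $\dist{b-tw}{C}\le\max\{\dist{b}{C}-t\sigma,\,0\}$, obtained from $-tw+t\sigma\Uball\subseteq C$ together with $C+C\subseteq C$ and an approximate nearest point $c_0$ (correctly avoiding any attainment issue in a general Banach space), is the right mechanism; the remainder $r(t)$ is indeed uniform in $a\in f(p,\mathcal{R}(p))$ because it comes from the single-valued $f(p,\cdot)$; pushing the supremum through the nondecreasing map $s\mapsto\max\{s-t\sigma,0\}$ needs only monotonicity for the inequality you use; and restricting the $\limsup$ in the slope to the ray is legitimate once $x_0$ is known not to be a local minimizer. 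Notably, your argument never uses hypothesis (i), which the paper needs only to meet the lower-semicontinuity requirement of the cited proposition; what you give up is the connection to the general prederivative calculus that the paper reuses elsewhere.

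One caveat, which is really an imprecision of the statement rather than a defect of your argument: the inference ``$x_0\notin\IESol(p)$, hence $0<\nu_0$'' is not valid in general. Since $\IESol(p)=\{x\in\mathcal{R}(p):\ \FRf(p,x)\subseteq C\}$, the point $x_0$ may fail to be a solution merely because $x_0\notin\mathcal{R}(p)$ while still $\FRf(p,x_0)\subseteq C$; in that case $\nu_{\FRf}(p,x_0)=0$, so $x_0$ is a global minimizer of the nonnegative function $\nu_{\FRf}(p,\cdot)$, the slope vanishes by definition, and the asserted bound fails for any $\sigma>0$. The hypothesis the lemma really needs is $\nu_{\FRf}(p,x_0)>0$, which is exactly what is available at the point where the lemma is invoked in the proof of Theorem \ref{thm:LiplscIE}, so nothing downstream is affected and your proof is complete under that reading.
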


\begin{proof}
By virtue of hypothesis (i) and Lemma \ref{lem:lscFRf}, the
set-valued mapping $\FRf(p,\cdot)$ is l.s.c. on $\X$, so,
in particular, l.s.c. at $x_0$. According with what has been noticed
in Remark \ref{rem:Bdifoutpreder}, $\FRf(p,\cdot)$ admits the
set-valued mapping $v\leadsto \{-\Bder{f(p,\cdot)}{x_0}(v)\}$
as an outer prederivative at $x_0$, owing to hypothesis (ii).

Now, if $\sigma$ and $u\in\Usfer$ are as in hypothesis (iii), one
has
$$
  -\Bder{f(p,\cdot)}{x_0;u}+\sigma\Uball\subseteq C
$$
and hence
$$
  \sup_{v\in\Usfer}|C\stardif \{-\Bder{f(p,\cdot)}{x_0}(v)\}|
  \ge\sigma,
$$
where $|S|=\sup\{r>0:\ r\Uball\subseteq S\}$. In the light of
\cite[Proposition 2.5]{Uder21b}, the last inequality implies
the estimate in $(\ref{in:stslFRfpos})$, thereby completing
the proof.
\hfill$\square$
\end{proof}

With the above elements, one is in a position to establish
the following result about stability of ideal efficient
solutions to $(\VOP{p})$.

\begin{theorem}[Lipschitz lower semicontinuity of $\IESol$]   \label{thm:LiplscIE}
With reference to a $(\VOP{p})$, let $\bar p\in P$ and
$\bar x\in\IESol(\bar p)$ be given. Suppose that:

\begin{itemize}

\item[(i)] $f:P\times\X\longrightarrow\Y$ is Lipschitz
continuous on $P\times\X$ with constant $\ell_f$;

\item[(ii)] $\mathcal{R}$ is Lipschitz u.s.c. at $\bar p$
and Lipschitz l.s.c. at $(\bar p,\bar x)$;

\item[(iii)] there exists $\delta_0>0$ such that $f(p,\cdot)$
is Bouligand differentiable on $\ball{\bar x}{\delta_0}$, for
each $p\in\ball{\bar p}{\delta_0}$;

\item[(iv)] there exist $\delta\in (0,\delta_0)$ and $\sigma>1$ such that
for every $(p,x)\in [\ball{\bar p}{\delta}\times\ball{\bar x}{\delta}] \\
\backslash\graph\IESol$ there is $u\in\Usfer$ such that
\begin{equation}     \label{in:Bdersigmapos}
  \Bder{f(p,\cdot)}{x;u}+\sigma\Uball\subset -C.
\end{equation}
\end{itemize}

\noindent Then, $\IESol$ is Lipschitz l.s.c. at $(\bar p,\bar x)$
and the following estimate holds
\begin{equation}     \label{in:LiplscestIESol}
  \Liplsc{\IESol}{\bar p}{\bar x}\le {\ell_f[2+\Lipusc{\mathcal{R}}{\bar p}]+
  \Liplsc{\mathcal{R}}{\bar p}{\bar x}\over\sigma-1}.
\end{equation}
\end{theorem}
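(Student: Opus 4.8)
The plan is to observe that the search for ideal efficient solutions to $(\VOP{p})$ is exactly the set-valued inclusion $(\PSV)$ with $F=\FRf$, so that $\IESol=\Solv$, and then to invoke Proposition \ref{pro:LiplscSolv}. The whole argument thus reduces to verifying, under the present concrete hypotheses (i)--(iv), the three abstract hypotheses of that proposition, and to propagating the resulting modulus estimate. The auxiliary Lemmas \ref{lem:lscFRf}, \ref{lem:LipusccompG}, \ref{lem:Lipusccalmadd} and \ref{lem:stslBdersigma} are precisely the translation devices that turn assumptions on the elementary data $f$ and $\mathcal{R}$ into the required assumptions on the composite mapping $\FRf$.

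For hypothesis (i) of Proposition \ref{pro:LiplscSolv} I would argue that, since $f$ is Lipschitz continuous on $P\times\X$, each partial map $f(p,\cdot)$ is continuous, whence Lemma \ref{lem:lscFRf} yields that $\FRf(p,\cdot)$ is l.s.c.\ on $\X$ for every $p$. For hypothesis (ii) I would treat the two requirements separately: the Lipschitz lower semicontinuity of $\mathcal{R}$ at $(\bar p,\bar x)$ is assumed outright in (ii) of the theorem, while for the Lipschitz upper semicontinuity of $\FRf(\cdot,\bar x)$ I would split $\FRf(p,\bar x)=G(p)+h(p)$ with $G(p)=f(p,\mathcal{R}(p))$ and $h(p)=-f(p,\bar x)$. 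Lemma \ref{lem:LipusccompG} gives that $G$ is Lipschitz u.s.c.\ at $\bar p$ with $\Lipusc{G}{\bar p}\le\ell_f[1+\Lipusc{\mathcal{R}}{\bar p}]$, and the Lipschitz continuity of $f$ makes $h$ calm at $\bar p$ with $\calm{h}{\bar p}\le\ell_f$. Lemma \ref{lem:Lipusccalmadd} then combines these into $\Lipusc{\FRf(\cdot,\bar x)}{\bar p}\le\ell_f[2+\Lipusc{\mathcal{R}}{\bar p}]$, which is the numerator contribution I expect to see in the final estimate.

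The key step is hypothesis (iii), namely $\psostsl{\nu_F}(\bar p,\bar x)>1$ for $\nu_F=\nu_{\FRf}$. The crucial observation is that, because $\bar x\in\IESol(\bar p)$ forces $\nu_{\FRf}(\bar p,\bar x)=0$, the strict outer slope only samples $\stsl{\nu_{\FRf}(p,\cdot)}(x)$ along pairs $(p,x)$ near $(\bar p,\bar x)$ with $0<\nu_{\FRf}(p,x)$ decreasing to $0$; but $\nu_{\FRf}(p,x)>0$ means $\FRf(p,x)\not\subseteq C$, i.e.\ $x\notin\IESol(p)$, so such pairs lie off $\graph\IESol$. Shrinking the neighbourhood below $\delta$, hypothesis (iv) then supplies a direction $u\in\Usfer$ with $\Bder{f(p,\cdot)}{x;u}+\sigma\Uball\subset -C$, and Lemma \ref{lem:stslBdersigma} converts this first-order $C$-negativity into the slope bound $\stsl{\nu_{\FRf}(p,\cdot)}(x)\ge\sigma$; taking the relevant $\liminf$ delivers $\psostsl{\nu_{\FRf}}(\bar p,\bar x)\ge\sigma>1$.

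Once the three hypotheses are in place, Proposition \ref{pro:LiplscSolv} directly yields that $\IESol=\Solv$ is Lipschitz l.s.c.\ at $(\bar p,\bar x)$, and substituting the bounds $\Lipusc{\FRf(\cdot,\bar x)}{\bar p}\le\ell_f[2+\Lipusc{\mathcal{R}}{\bar p}]$ and $\psostsl{\nu_{\FRf}}(\bar p,\bar x)\ge\sigma$ into its modulus estimate produces exactly $(\ref{in:LiplscestIESol})$. I expect the only delicate point to be the bookkeeping in hypothesis (iii): one must carefully match the fixed box $\ball{\bar p}{\delta}\times\ball{\bar x}{\delta}$ of hypothesis (iv) with the shrinking neighbourhoods implicit in the definition of $\psostsl{\cdot}$, and confirm that \emph{every} pair contributing to the $\liminf$ indeed avoids $\graph\IESol$ so that Lemma \ref{lem:stslBdersigma} legitimately applies there.
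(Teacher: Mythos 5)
Your proposal is correct and follows essentially the same route as the paper's own proof: reduce to Proposition \ref{pro:LiplscSolv} with $F=\FRf$, verify its hypothesis (i) via Lemma \ref{lem:lscFRf}, its hypothesis (ii) via the decomposition $\FRf(\cdot,\bar x)=f(\cdot,\mathcal{R}(\cdot))-f(\cdot,\bar x)$ combined with Lemmas \ref{lem:LipusccompG} and \ref{lem:Lipusccalmadd}, and its hypothesis (iii) by noting that $\nu_{\FRf}(p,x)>0$ forces $(p,x)\notin\graph\IESol$ so that Lemma \ref{lem:stslBdersigma} applies. The modulus bookkeeping you describe matches the paper's estimate exactly.
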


\begin{proof}
The proof consists in showing that, under the current assumptions, it is
possible to apply Proposition \ref{pro:LiplscSolv}, with $F=\FRf$. To do so,
let us start with observing, since each mapping $x\mapsto f(p,x)$ is
continuous on $\X$, for every $p\in P$, as a consequence of hypothesis (i),
then on account of Lemma \ref{lem:lscFRf} each set-valued mapping
$\FRf(p,\cdot)$ is l.s.c. on $\X$, for every $p\in P$.
This shows that hypothesis (i) of Proposition \ref{pro:LiplscSolv}
is fulfilled.

Moreover, by virtue of hypothesis (i) and (ii), Lemma \ref{lem:LipusccompG}
ensures that the set-valued mapping $p\leadsto f(p,\mathcal{R}(p))$
is Lipschitz u.s.c. at $\bar p$, with $\Lipusc{f(\cdot,\mathcal{R}(\cdot))}{\bar p}
\le \ell_f[1+\Lipusc{\mathcal{R}}{\bar p}]$. Since for any fixed $x\in\X$ the mapping
$p\mapsto f(p,x)$ is calm at $\bar p$, again as a consequence of
hypothesis (i), with constant $\ell_f>\calm{f(\cdot,x)}{\bar p}$,
then Lemma \ref{lem:Lipusccalmadd} enables one to say that $\FRf$
is Lipscitz u.s.c. at $\bar p$, with
$$
  \Lipusc{\FRf}{\bar p}\le\ell_f[1+\Lipusc{\mathcal{R}}{\bar p}]
  +\ell_f.
$$
This shows that all the requirements in the hypothesis (ii) of
Proposition \ref{pro:LiplscSolv} are fulfilled under the
assumptions made.

It remains to show that also hypothesis (iii) of Proposition
\ref{pro:LiplscSolv} is fulfilled. This can be done by
applying Lemma \ref{lem:stslBdersigma}. Remembering the definition
of partial strict outer slope, one has to prove the existence
of $\epsilon>0$ such that
$$
  \stsl{\nu_{\FRf}(p,\cdot)}(x)>1,\quad\forall (p,x)\in
  \ball{\bar p}{\epsilon}\times\ball{\bar x}{\epsilon},\quad
  0<\nu_{\FRf}(p,x)<\epsilon.
$$
So, taking $\epsilon\in (0,\delta)$, where $\delta>0$ is as in hypothesis (iv),
and an arbitrary $(p,x)\in\ball{\bar p}{\epsilon}\times\ball{\bar x}{\epsilon}$,
one has that, according to hypothesis (iii), if it is $\nu_{\FRf}(p,x)>0$
then $(p,x)\not\in\graph\IESol$ and therefore, by hypothesis (iv)
there exists $u\in\Usfer$ such that inclusion $(\ref{in:Bdersigmapos})$
holds. In turn this inclusion, on account of Lemma \ref{lem:stslBdersigma},
implies that $\stsl{\nu_{\FRf}(p,\cdot)}(x)\ge\sigma>1$.

Thus the thesis follows by taking into account that, in the
current setting, $\IESol=\Solv$.
This completes the proof.
\hfill$\square$
\end{proof}

Hypothesis (ii) in Theorem \ref{thm:LiplscIE} refers to a certain
stability behaviour of $\mathcal{R}$. In concrete problems, this
set-valued mapping is defined by a large variety of constraint
systems. For many of them, in the last decades adequate conditions
ensuring the needed stability behaviour have been developed
within variational analysis (see \cite[Chapter 4.D]{DonRoc14},
\cite{KlaKum02},\cite[Chapter 4.3]{Mord06} and references therein).

The stability behaviour of $\IESol$ established by Theorem \ref{thm:LiplscIE}
has a remarkable consequence on the stability of ideal
efficient values, which can be formulated through the
mapping $\ival:P\longrightarrow\Y$.

\begin{corollary}[Calmness of $\ival$]
Under the same hypotheses as in Theorem \ref{thm:LiplscIE} the mapping
$\ival:P\longrightarrow\Y$ is calm at $\bar p$ and it holds
$$
   \calm{\ival}{\bar p}\le {\ell_f^2[2+\Lipusc{\mathcal{R}}{\bar p}]+
  \ell_f(\Liplsc{\mathcal{R}}{\bar p}{\bar x}+1)\over\sigma-1}.
$$
\end{corollary}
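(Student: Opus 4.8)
The plan is to obtain the calmness of $\ival$ as a direct corollary of the Lipschitz lower semicontinuity of $\IESol$ furnished by Theorem \ref{thm:LiplscIE}, combined with the Lipschitz continuity of $f$ postulated in hypothesis (i). The point that must be handled with care is the one already recorded in the Introduction: since $C$ is pointed, $\ival$ is genuinely single-valued, so that for each admissible $p$ every element of $\IESol(p)$ yields one and the same value $\ival(p)$. This is essential, because the Lipschitz lower semicontinuity property only delivers \emph{some} solution lying close to $\bar x$, whereas $\ival(p)$ is a priori defined through an arbitrary solution; single-valuedness is exactly what lets me evaluate $\ival(p)$ on the particular near solution produced by the stability property.

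First I would invoke Theorem \ref{thm:LiplscIE}, whose hypotheses are precisely those of the Corollary. It guarantees that $\IESol$ is Lipschitz l.s.c. at $(\bar p,\bar x)$; hence, fixed any $\ell>\Liplsc{\IESol}{\bar p}{\bar x}$, there is $\delta>0$ such that for every $p\in\ball{\bar p}{\delta}$ one may select $\bar x_p\in\IESol(p)\cap\ball{\bar x}{\ell\,d(p,\bar p)}$. In particular $\IESol(p)\ne\varnothing$ on $\ball{\bar p}{\delta}$, so $\ival$ is well defined there, and by single-valuedness $\ival(p)=f(p,\bar x_p)$ for this very solution, while $\ival(\bar p)=f(\bar p,\bar x)$.

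Next I would estimate the increment of $\ival$ by splitting the variation of $f$ into a parameter part and a state part, exactly as the product Lipschitz estimate $\|f(p,x)-f(\bar p,z)\|\le\ell_f[d(p,\bar p)+d(x,z)]$ used in Lemma \ref{lem:LipusccompG} suggests. Using hypothesis (i) and the bound $d(\bar x_p,\bar x)\le\ell\,d(p,\bar p)$,
\[
  \|\ival(p)-\ival(\bar p)\|=\|f(p,\bar x_p)-f(\bar p,\bar x)\|
  \le \ell_f\,[\,d(p,\bar p)+d(\bar x_p,\bar x)\,]
  \le \ell_f\,(1+\ell)\,d(p,\bar p),
\]
for every $p\in\ball{\bar p}{\delta}$. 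This is precisely the calmness inequality $(\ref{in:calmdef})$ for the single-valued mapping $\ival$, so $\ival$ is calm at $\bar p$ with $\calm{\ival}{\bar p}\le\ell_f(1+\ell)$.

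Finally, letting $\ell\downarrow\Liplsc{\IESol}{\bar p}{\bar x}$ gives $\calm{\ival}{\bar p}\le\ell_f\bigl(1+\Liplsc{\IESol}{\bar p}{\bar x}\bigr)$; inserting the modulus estimate $(\ref{in:LiplscestIESol})$ and recalling, from the proof of Theorem \ref{thm:LiplscIE}, that its numerator equals $\Lipusc{\FRf(\cdot,\bar x)}{\bar p}+\Liplsc{\mathcal{R}}{\bar p}{\bar x}$ with $\Lipusc{\FRf(\cdot,\bar x)}{\bar p}\le\ell_f[2+\Lipusc{\mathcal{R}}{\bar p}]$, one reduces everything to the common denominator $\sigma-1$ and reads off the announced bound. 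The only genuinely delicate point is this last bookkeeping of constants: one has to track how the factor $\ell_f$ multiplies the modulus $\Liplsc{\IESol}{\bar p}{\bar x}$ and how the parameter‑variation contribution of $f$ merges with the numerator $\ell_f[2+\Lipusc{\mathcal{R}}{\bar p}]+\Liplsc{\mathcal{R}}{\bar p}{\bar x}$; the conceptual content — single-valuedness of $\ival$, the selection of a near solution via Lipschitz lower semicontinuity, and the triangle inequality through the Lipschitz property of $f$ — is entirely routine once Theorem \ref{thm:LiplscIE} is in hand.
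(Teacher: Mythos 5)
Your proof is correct and follows essentially the same route as the paper's: invoke Theorem \ref{thm:LiplscIE} to select a near solution $x_p\in\IESol(p)$ with $d(x_p,\bar x)\le\ell\, d(p,\bar p)$, use single-valuedness of $\ival$ together with the Lipschitz continuity of $f$ to get $\|\ival(p)-\ival(\bar p)\|\le\ell_f(1+\ell)\,d(p,\bar p)$, and then let $\ell$ decrease to the modulus. The only caveat --- shared by the paper's own final step --- is that $\ell_f\bigl(1+\Liplsc{\IESol}{\bar p}{\bar x}\bigr)$ produces an additive term $\ell_f$ rather than $\ell_f/(\sigma-1)$, so the ``common denominator'' reading of the stated bound is exact only when $\sigma=2$; this bookkeeping wrinkle lies in the corollary's statement, not in your argument.
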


\begin{proof}
By Theorem \ref{thm:LiplscIE} $\IESol$ is Lipschitz l.s.c. at
$(\bar p,\bar x)$, with the related modulus estimate. So,
if taking an arbitrary $\ell>\Liplsc{\IESol}{\bar p}{\bar x}$,
there exists $\zeta_\ell>0$ such that for any $p\in
\ball{\bar p}{\zeta_\ell}$ an element $x_p$ must belong to
$\IESol(p)$ with the property that $d(x_p,\bar x)\le\ell d(p,\bar p)$.
Consequently, it results in
\begin{eqnarray*}
   |\ival(p)-\ival(\bar p)| &=& |f(p,x_p)-f(\bar p,\bar x)|
   \le\ell_f[d(p,\bar p)+\|x_p-\bar x\|]   \\
   &\le & \ell_f[1+\ell]d(p,\bar p), \quad\forall p
   \in\ball{\bar p}{\zeta_\ell}.
\end{eqnarray*}
This says that $\ival$ is calm at $\bar p$.
By arbitrariness of $\ell>\Liplsc{\IESol}{\bar p}{\bar x}$,
to obtain the estimate complementing the thesis, it suffices
to recall the inequality in $(\ref{in:LiplscestIESol})$.
\hfill$\square$
\end{proof}

\begin{example}
Let $P=[0,+\infty)$, $\X=\Y=\R^2$, $C=\R^2_+$, with $f:[0,+\infty)
\times\R^2\longrightarrow\R^2$ given by
$$
  f(p,x)=(2\arctan x_2,-2\arctan x_1),
$$
and $\mathcal{R}:[0,+\infty)\rightrightarrows\R^2$ given by
$$
  \mathcal{R}=\{x\in\R^2:\ x_1\ge 0,\ x_2\ge 0,\ x_1+x_2
  \le\beta(p)\},
$$
where $\beta:[0,+\infty)\longrightarrow[0,+\infty)$ is
a function with $\beta(0)=0$ and calm from above at $0$.
Take $\bar p=0$ and $\bar x=(0,0)$.

In order to find the ideal efficient solutions to
the corresponding $(\VOP{p})$, it is convenient to observe
first that $\IESol(0)=\{(0,0)\}$ and that, for every $y=(y_1,y_2)
\in f(p,\mathcal{R}(p))$, with $p\in [0,+\infty)$, according
to the definition of $f$ and $\mathcal{R}(p)$, one has
$$
  y_1\ge 0 \qquad\hbox{ and }\qquad y_2\ge -2\arctan\beta(p).
$$
In other terms, $(\beta(p),0)\in\mathcal{R}(p)$ and
$$
  f(p,\mathcal{R}(p))\subseteq f(p,(\beta(p),0))+\R^2_+=
  (0,-2\arctan\beta(p))+\R^2_+,
$$
which means that $(\beta(p),0)\in\IESol(p)$, for every $p\in [0,+\infty)$.
Besides, since the vector $(0,-2\arctan\beta(p))$ can be the only ideal efficient
element of the set $f(p,\mathcal{R}(p))$ and the function $x\mapsto f(p,x)$
is injective, one can state that
$$
   \IESol(p)=\{(\beta(p),0)\},\quad\forall p\in [0,+\infty).
$$
Thus, since for any $c_\beta>\ucalm{\beta}{0}$ there exists $\delta>0$
such that it holds
$$
  \dist{(0,0)}{\IESol(p)}=\beta(p)\le c_\beta p,
  \quad\forall p\in [0,\delta],
$$
it is possible to deduce that $\IESol$ is Lipschitz l.s.c.
(actually, also Lipschitz u.s.c. and hence calm) at $(0,(0,0))$,
with
\begin{equation}    \label{in:LiplscIESolexdir}
  \Liplsc{\IESol}{0}{(0,0)}\le\ucalm{\beta}{0}.
\end{equation}
In order to test the application of Theorem \ref{thm:LiplscIE}
in this concrete case, let us start with noticing that,
since $f(p,\cdot)$ is (Fr\'echet) differentiable on
$\R^2$ and the linear mapping $\Fder f(p,\cdot)(x):\R^2\longrightarrow\R^2$
can be represented by the Jacobian matrix
$$
  \Fder f(p,\cdot)(x)=
  \left(\begin{array}{cc} 0 & \displaystyle{2\over 1+x_2^2} \\
                          -\displaystyle{2\over 1+x_1^2} & 0
                   \end{array}\right),
$$
with
\begin{eqnarray*}
   \|\Fder f(p,\cdot)(x)\|_\Lin &=&  \sup_{u\in\Usfer}
   \left\|
   \left(\begin{array}{cc} 0 & \displaystyle{2\over 1+x_2^2} \\
                          -\displaystyle{2\over 1+x_1^2} & 0
                   \end{array}\right)
   \left(\begin{array}{c} u_1 \\
                          u_2
                   \end{array}\right)
   \right\|  \\
   & & \\
   &=& \sup_{u\in\Usfer} \left\|
   \left(\begin{array}{cc} \displaystyle{2u_2\over 1+x_2^2}, &
   -\displaystyle{2u_1\over 1+x_1^2}
   \end{array}\right)
   \right\| \le
   \sqrt{{4\over (1+x_1^2)^2}+{4\over (1+x_2^2)^2}}  \\
   &\le& 2\sqrt{2},\quad  \forall x=(x_1,x_2)\in\R^2,
\end{eqnarray*}
then $f$ turns out to be Lipschitz continuous on $[0,+\infty)
\times\R^2$, with constant $\ell_f=2\sqrt{2}$. Since it is
$$
  \exc{\mathcal{R}(p)}{\mathcal{R}(0)}=\|(\beta(p),0)\|=\beta(p)
  \le c_\beta p,\quad\forall p\in [0,\delta],
$$
it is true that $\mathcal{R}$ is Lipschitz u.s.c. at $0$, with
$\Lipusc{\mathcal{R}}{0}\le c_\beta$. Moreover, as it is
$$
  \mathcal{R}(0)=\{(0,0)\}\subseteq\mathcal{R}(p),
  \quad\forall p\in [0,+\infty),
$$
one sees that for every $\ell>0$ is holds
$$
  \mathcal{R}(p)\cap\ball{(0,0)}{\ell|p|}\ne\varnothing,
  \quad\forall p\in [0,+\infty),
$$
what says that $\mathcal{R}$ is also Lipschitz l.s.c. at
$(0,(0,0))$ and $\Liplsc{\mathcal{R}}{0}{(0,0)}=0$.

Now, take an arbitrary $x\in\ball{(0,0)}{\delta}\backslash
\{(0,0)\}$, with $\delta$ fixed in such a way that
$0<\delta<\sqrt{\sqrt{2}-1}$, and set
$$
  \sigma={\sqrt{2}\over 1+\delta^2}.
$$
Notice that $\sigma>1$, because $\delta<\sqrt{\sqrt{2}-1}$.
Taking $u=(1/\sqrt{2},-1/\sqrt{2})\in\Usfer$, one finds
$$
  \Fder f(p,\cdot)(x)u=
  \left(\begin{array}{cc} -\displaystyle{\sqrt{2}\over 1+x_2^2}, &
   -\displaystyle{\sqrt{2}\over 1+x_1^2}\end{array}\right),
$$
whence it follows
$$
 \dist{\Fder f(p,\cdot)(x)u}{\R^2\backslash(-\inte\R^2_+)}=
 \min\left\{{\sqrt{2}\over 1+x_1^2},\, {\sqrt{2}\over 1+x_2^2}\right\}
 \ge {\sqrt{2}\over 1+\delta^2}.
$$
Consequently, it is true that
$$
  \Fder f(p,\cdot)(x)u+\sigma\Uball\subseteq -\R^2_+, \quad\forall
  x\in\ball{(0,0)}{\delta}\backslash\{(0,0)\}.
$$
This shows that also hypothesis (iii) of Theorem \ref{thm:LiplscIE}
is fulfilled. In the case under consideration, the estimate in
$(\ref{in:LiplscestIESol})$ becomes
$$
  \Liplsc{\IESol}{0}{(0,0)}\le {2\sqrt{2}[2+c_\beta]\over
  \displaystyle{\sqrt{2}\over 1+\delta^2}-1},
$$
which is consistent with (even though, less accurate than) the estimate in
$(\ref{in:LiplscIESolexdir})$, obtained by direct inspection of
$\IESol$. Indeed, one sees that
$$
  \lim_{\delta\to 0^+}  {2\sqrt{2}[2+c_\beta]\over
  \displaystyle{\sqrt{2}\over 1+\delta^2}-1} =
  {4\sqrt{2}+2\sqrt{2}c_\beta\over \sqrt{2}-1}>c_\beta>
  \ucalm{\beta}{0}
$$
(whereas
$$
  \lim_{\delta\to{\sqrt{\sqrt{2}-1}\ }^-}{2\sqrt{2}[2+c_\beta]\over
  \displaystyle{\sqrt{2}\over 1+\delta^2}-1}=+\infty>\ucalm{\beta}{0}\ ).
$$

\end{example}

The above example suggests that, whenever $f$ is one-to-one, $\IESol$ is
single-valued and this fact automatically enhances the Lipschitz lower semicontinuity
property to calmness. It is well known that a sufficient condition for
a Lipschitz (possibly, nonsmooth) mapping $f$ between finite-dimensional spaces
to be a homeomorphism can be expressed in terms of Clarke's
generalized Jacobian (see \cite{Pour82}).
Let $\partial^\circ f(p,\cdot)(x_0)$ denote the {\it Clarke's generalized
Jacobian} of $f(p,\cdot):\R^n\longrightarrow\R^n$ at $x_0\in\R^n$,
i.e. the set
\begin{eqnarray*}
  \partial^\circ f(p,\cdot)(x_0) &=& \conv\left\{\Lambda\in\Lin(\R^n):\
  \exists (x_k)_k,\ x_k\in \Diff{f(p,\cdot)},\ x_k\to x_0,  \right. \\
  & & \left. \Fder f(p,\cdot)(x_k)\longrightarrow\Lambda \hbox{ as } k\to\infty \right\},
\end{eqnarray*}
where $\Diff{f(p,\cdot)}$ indicates the set of points at which the function
$x\mapsto f(p,x)$ is (Fr\'echet) differentiable (the Rademacher theorem
ensures that such a set is a Lebesgue full measure subset of $\R^n)$.

\begin{corollary}
Under the same hypotheses as in Theorem \ref{thm:LiplscIE}, suppose that
$\X=\Y=\R^n$ and

\begin{itemize}

\item[(v)] for every $p\in P$ there exists $\gamma_p>0$ such that, for
every $x\in\R^n$, every $\Lambda\in\partial^\circ f(p,\cdot)(x)$ is invertible
and
$$
  \sup_{x\in\R^n}\sup_{\Lambda\in\partial^\circ f(p,\cdot)(x)}
  \|\Lambda^{-1}\|_\Lin\le\gamma_p.
$$

\end{itemize}

\noindent Then, $\IESol$ is single-valued and calm at $\bar p$, with
$$
  \calm{\IESol}{\bar p}\le {\ell_f[2+\Lipusc{\mathcal{R}}{\bar p}]+
  \Liplsc{\mathcal{R}}{\bar p}{\bar x}\over\sigma-1}.
$$
\end{corollary}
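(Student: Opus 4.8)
The plan is to treat hypothesis (v) as a pure single-valuedness device, while all the quantitative work is already carried out by Theorem~\ref{thm:LiplscIE}. Indeed, that theorem (whose hypotheses (i)--(iv) are assumed here verbatim) already guarantees that $\IESol$ is Lipschitz l.s.c. at $(\bar p,\bar x)$ together with the modulus bound (\ref{in:LiplscestIESol}); in particular it furnishes $\delta,\ell>0$ with $\IESol(p)\cap\ball{\bar x}{\ell d(\bar p,p)}\ne\varnothing$ for every $p\in\ball{\bar p}{\delta}$, so that $\IESol(p)\ne\varnothing$ throughout a neighbourhood of $\bar p$. The task is then only to promote Lipschitz lower semicontinuity to calmness, and for this it suffices to show that $\IESol$ is single-valued near $\bar p$.

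First I would exploit hypothesis (v) through the cited global inverse theorem of \cite{Pour82}. By hypothesis (i) of Theorem~\ref{thm:LiplscIE} each $f(p,\cdot):\R^n\longrightarrow\R^n$ is (globally) Lipschitz, and by (v) its Clarke generalized Jacobian $\partial^\circ f(p,\cdot)(x)$ consists, at every $x\in\R^n$, of invertible matrices whose inverses are uniformly norm-bounded by $\gamma_p$. This is precisely the Hadamard-type hypothesis under which the result in \cite{Pour82} asserts that $f(p,\cdot)$ is a Lipschitz homeomorphism of $\R^n$ onto itself; in particular $f(p,\cdot)$ is injective, for every $p\in P$.

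Next I would combine this injectivity with the uniqueness of the ideal value already noted in the introduction. Since $C$ is pointed, whenever $x_1,x_2\in\IESol(p)$ one has $f(p,x_1)=f(p,x_2)$ (both equal the unique ideal efficient value); injectivity of $f(p,\cdot)$ then forces $x_1=x_2$, so $\IESol(p)$ contains at most one element. Coupling this with the nonemptiness supplied above, $\IESol(p)$ is a singleton for every $p\in\ball{\bar p}{\delta}$, i.e. $\IESol$ is single-valued in a neighbourhood of $\bar p$.

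To conclude I would invoke the observation recorded in Section~\ref{Sect:3}: for a mapping that is single-valued around $\bar p$, the conditions (\ref{ne:defLiplsc}) and (\ref{ne:defLipusc}) collapse to the common calmness form (\ref{in:calmdef}), and $\calm{\IESol}{\bar p}=\Liplsc{\IESol}{\bar p}{\bar x}$. The modulus estimate of the corollary is then literally the bound (\ref{in:LiplscestIESol}) delivered by Theorem~\ref{thm:LiplscIE}. The only delicate point is the correct use of \cite{Pour82}: one must read (v) as the uniform nonsingularity-plus-bounded-inverse condition that yields \emph{global} injectivity of $f(p,\cdot)$ (local invertibility alone would not rule out two distinct ideal efficient solutions lying far apart in $\mathcal{R}(p)$), and one must ensure single-valuedness holds on a whole ball around $\bar p$ rather than merely at $\bar p$, which is exactly why the nonemptiness part of Theorem~\ref{thm:LiplscIE} is needed in tandem with injectivity.
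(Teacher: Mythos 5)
Your proposal is correct and follows essentially the same route as the paper: hypothesis (v) feeds Pourciau's Lipschitzian Hadamard theorem to get global injectivity of $f(p,\cdot)$, which combined with the uniqueness of the ideal value (pointedness of $C$) makes $\IESol$ single-valued, whereupon Lipschitz lower semicontinuity from Theorem \ref{thm:LiplscIE} collapses to calmness with the same modulus bound. Your extra remarks on the necessity of \emph{global} (not merely local) injectivity and on nonemptiness of $\IESol(p)$ near $\bar p$ are sound elaborations of points the paper leaves implicit.
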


\begin{proof}

Fix an arbitrary $p\in P$. The additional hypothesis (v) enables one
to apply the Lipschitzian Hadamard theorem in \cite{Pour82}.
According to it, the mapping $f(p,\cdot):\R^n\longrightarrow\R^n$
is one-to-one on $\R^n$. Consequently, since it is
$$
   \IESol(p)=f^{-1}(p,\cdot)(\ival(p))\cap\mathcal{R}(p),
$$
the mapping $\IESol$ must be single-valued. As already remarked,
in such a circumstance Lipschitz lower semicontinuity and calmness
collapse to the same property. So the thesis becomes a consequence
of Theorem \ref{thm:LiplscIE}.
\hfill$\square$
\end{proof}


\section{Conclusions}   \label{Sect:5}

Evidences show that ideal efficiency has a delicate geometry.
The findings of the present paper demonstrate that the analysis
of the solution stability for parameterized set-valued inclusions
can afford useful insights into the behaviour of ideal efficient solutions
to vector optimization problems subject to perturbations,
from both the qualitative and the quantitative viewpoint.
The study has focused on the Lipschitz lower semicontinuity property
for the ideal efficient solution mapping, but it is reasonable to expect
that other quantitative stability properties widely considered in
variational analysis (such as Lipschitz upper
semicontinuity, calmness and the Aubin property) can be fruitfully
investigated by the same approach, via set-valued inclusions.
While the analysis of parameterized set-valued inclusions has been conducted
in a rather abstract setting, the related achievements have been
subsequently applied in a more structured context, where the employment
of well-known generalized derivatives ensures applicability
of results to a large class of problems.
The choice made in this part of the work leaves open the possibility
to refine the stability conditions here obtained by means of other,
more sophisticated, tools of nonsmooth analysis.

\vskip1cm


\end{document}